\definecolor{thmcol}{RGB}{65, 102, 245}
\definecolor{citecol}{RGB}{65, 102, 245}
\definecolor{linkcol}{RGB}{65, 102, 245}
\definecolor{urlcol}{RGB}{65, 102, 245}
\theoremstyle{plain}
\newtheorem{lemma}{Lemma}[section]
\newtheorem{thm}[lemma]{Theorem}
\newtheorem*{thm1}{Theorem \ref{main}}
\newtheorem*{thmp}{Theorem \ref{pesos}}
\newtheorem*{cor1}{Corollary \ref{genho}}
\newtheorem*{coro2}{Corollary \ref{ciclos}}
\newtheorem{corollary}[lemma]{Corollary}
\theoremstyle{remark}
\newtheorem{remark}[lemma]{Remark}
\theoremstyle{definition}
\newtheorem{definition}[lemma]{Definition}
\newtheorem{ej}[lemma]{Example}
\def\k{\mathcal{K}}
\def\Z{\mathbb{Z}}
\def\N{\mathbb{N}}
\def\pe{\mathcal{P}}
\def\kp{\k_{\mathcal{P}}}
\def\tk{\tilde{\k}}
\def\pg{P(\Gamma)}
\begin{document}

\title[Local indicability of groups with homology circle presentations]{Local indicability of groups with homology circle presentations}
\author[A.N. Barreto and E.G. Minian]{Agustín Nicolás Barreto and El\'\i as Gabriel Minian}

\address{Departamento  de Matem\'atica-IMAS (CONICET)\\
 FCEyN, Universidad de Buenos Aires\\ Buenos
Aires, Argentina.}

\email{abarreto@dm.uba.ar}
\email{gminian@dm.uba.ar}

\thanks{Researchers of CONICET. Partially supported by grants PICT 2019-2338 and UBACYT 20020190100099BA}

\begin{abstract}
We investigate conditions that guarantee local indicability of groups that admit presentations with the homology of a circle, generalizing a result of J. Howie for two-relator presentations. We apply our results to investigate local indicability of LOT groups and some classes of non-cycle-free Adian presentations, extending previous results in that direction by J. Howie and D. Wise.
\end{abstract}

\subjclass[2020]{20F05, 20F65, 57M05, 57M07, 57M10.}

\keywords{Locally indicable groups, labelled oriented trees, asphericity.}

\maketitle

\section{Introduction}
Recall that a group $G$ is locally indicable if each of its finitely generated nontrivial subgroups admits a homomorphism onto the infinite cyclic group $\Z$. This class of groups appeared in the work of Higman on the units of group rings \cite{hig} and has been intensively studied in the last four decades since the works of Brodskii \cite{br} and Howie \cite{h0,h1}. The theory of locally indicable  groups has connections with equations over groups \cite{br,h0,km}, with complexes of non-positive curvature and coherence \cite{Wil,wi,wi2,hs}, and with dynamics and left orderable groups (see for example \cite{cr,rr}). They are also related to the study of asphericity of $2$-complexes and Whitehead's conjecture, since any connected $2$-complex $X$ with $\pi_1(X)$ locally indicable and $H_2(X)=0$ is aspherical (see \cite{h1,h2}). In fact,  locally indicable groups are $\Z$-conservative \cite{ge0,hsc}.

It is known that torsion-free one-relator groups are locally indicable. Moreover one-relator products of locally indicable groups are locally indicable if the relator is not a proper power (see \cite{h1}). In \cite[Theorem 6.2]{h2} Howie proved local indicability of groups admitting presentations $\pe=\langle a,b,c \ | \ a^{-1}w_1^{-1}bw_1,\ b^{-1}w_2^{-1}cw_2 \rangle$ with three generators and two relators, where $w_1$ is a specific type of word called sloping word. These presentations  arise naturally when studying (weak) labelled oriented trees (LOTs), and the result is used to prove local indicability of LOT groups for LOTs of diameter $3$ and some families of LOTs of diameter $4$ (see \cite{h2}).

One of the main results of this paper is a generalization of Howie's result \cite[Theorem 6.2]{h2} to homology circle presentations of any length where the relators  satisfy some extra hypotheses (weaker than Howie's original result for two-relator presentations). It is clear that some extra condition is needed to derive local indicability, since one can construct homology circle presentations of non locally indicable groups just by adding a new generator to any balanced presentation of a perfect group. The condition that we require depends on the minima or maxima of the total exponents of the initial subwords of all but one of the relators (see Definitions \ref{defmulti} and \ref{defconcatenable}).
\begin{thm1}
	Let $\mathcal{P} = \langle a_1, \ldots, a_{k+2} \ | \ r_1, \ldots, r_k, s \rangle$ be a presentation (of deficiency $1$) of a group $G$ with $H_1(G)=\Z$  (for some $k\geq 0$), where all the relators are cyclically reduced and have total exponent $0$. If the multisets of minima $m(r_1), \ldots, m(r_k)$ are concatenable, then $G$ is locally indicable. 
\end{thm1}
The proof essentially follows Howie's original proof of  \cite[Theorem 6.2]{h2}, adapted to a more general context. One can recover Howie's result as a particular case. 

The condition on the relators having total exponent $0$ can be dropped by considering any surjective homomorphism $\varphi:G\to\Z$ and the sequences of minima or maxima with respect to $\varphi$.
\begin{thmp}
	Let $\mathcal{P} = \langle a_1, \ldots, a_{k+2} \ | \ r_1, \ldots, r_k, s \rangle$ be a presentation (of deficiency $1$) of a group $G$ with $H_1(G)=\Z$ (for some $k\geq 0$), where all the relators are cyclically reduced. If there is a surjective homomorphism  $\varphi : G \to \mathbb{Z}$ such that $m_{\varphi}(r_1), \ldots, m_{\varphi}(r_k)$ are concatenable, then $G$ is locally indicable.
\end{thmp}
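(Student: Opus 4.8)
The plan is to obtain Theorem~\ref{pesos} by re-running the proof of Theorem~\ref{main} with $\varphi$ in place of the total-exponent homomorphism, since the hypothesis ``total exponent $0$'' enters that proof only through the surjection $G\to\Z$ it produces. Lift $\varphi$ to the free group $F$ on $a_1,\dots,a_{k+2}$ by $\hat\varphi(a_i)=\varphi(a_i)$. Because each $r_j$ is trivial in $G$, it lies in $\ker\varphi$, so $\hat\varphi(r_j)=0$: the $\varphi$-weighted sum of the exponents of every relator vanishes. This is exactly what ``total exponent $0$'' guarantees in Theorem~\ref{main}, where the relevant surjection is the total-exponent map $a_i\mapsto 1$. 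Thus $\hat\varphi$ should assume, verbatim, the role of the total exponent.

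Concretely, I would first record that, since $\hat\varphi(r_j)=0$, the values of $\hat\varphi$ on the successive initial subwords of $r_j$ form a sequence beginning and ending at $0$; the multiset $m_\varphi(r_j)$ of its minima is then the literal $\varphi$-weighted analogue of $m(r_j)$, and ``concatenable'' means the same combinatorial condition applied to these profiles. I would then go through the proof of Theorem~\ref{main} replacing every appearance of the total exponent by $\hat\varphi$, so that the induced $\Z$-grading, the associated tower (or covering) decomposition, and the gluings carried out at the minima are all performed with respect to $\varphi$. The concatenability of $m_\varphi(r_1),\dots,m_\varphi(r_k)$ supplies precisely the input that concatenability of $m(r_1),\dots,m(r_k)$ supplied there, and the conclusion---local indicability of $G$---should follow without further change.

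The point requiring the most care is to confirm that no step of the argument inherited from Howie secretly uses that each generator has weight $\pm1$. Under the total-exponent map a letter $a_i^{\pm1}$ changes the height by exactly $\pm1$, so the profile of a relator is a lattice path with unit steps; for general $\varphi$ a single letter may change the height by an arbitrary integer $\varphi(a_i)$, and a letter with $\varphi(a_i)=0$ produces no change at all. The crux is therefore to check that the identification of the successive levels of the grading and the decomposition used to deduce asphericity (hence indicability) survive these non-unit and possibly zero steps. I expect the definitions of $m_\varphi$ and of concatenability to be framed exactly so as to absorb this generality, making the verification routine once the substitution has been made.

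Finally, the ``maxima'' variant mentioned in the statement requires no extra work: since $H_1(G)=\Z$, the only surjections $G\to\Z$ are $\pm\varphi$, and replacing $\varphi$ by $-\varphi$ interchanges minima and maxima. Hence concatenability of the $\varphi$-maxima coincides with concatenability of the $(-\varphi)$-minima, and local indicability follows by applying the minima version already established to $-\varphi$.
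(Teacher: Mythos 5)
Your high-level plan coincides with the paper's: Theorem \ref{pesos} is indeed proved by re-running the proof of Theorem \ref{main} over the infinite cyclic cover $\tk$ determined by $\varphi$, with each $1$-cell $a_{i,j}$ now oriented from $j$ to $j+\varphi(a_i)$. The genuine gap sits exactly at the point you flag and then dismiss as ``routine once the substitution has been made.'' The substitution is \emph{not} verbatim, and the step that breaks is the one proving that the sequence $\{\alpha_j\}$ of minima of the iterated rewrites stabilizes. In Theorem \ref{main} this is an Euler characteristic argument applied to the full subcomplex $Y$ of $\tk$ spanned by the $0$-cells $0,\ldots,n+\rho+\sigma$: one uses that $Y$ is connected ($\beta_0(Y)=1$), that $Y''$ retracts onto the graph $Z$ formed by the $1$-cells $a_{k+1,j}$, $a_{k+2,j}$ below level $\rho+\sigma+1$, and that $\beta_1(Z)=\rho+\sigma+1$; comparing with the cell count $\chi(Y)=-\rho-\sigma+k+2$ yields the contradiction $k+2\leq 0$. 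All three ingredients secretly use that every generator has weight $1$. For general non-negative weights the $1$-cells jump by $\varphi(a_i)$, so neither $Y$ nor $Z$ need be connected (for instance $Z$ is disconnected whenever $\varphi(a_{k+1})$ and $\varphi(a_{k+2})$ have a common factor $d>1$, since its edges only join $0$-cells congruent modulo $d$), and both Betti number counts are wrong.

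The paper repairs this by changing the construction, not merely the reading of it: $Y$ is the full subcomplex \emph{together with} extra $1$-cells attached between $j$ and $j+1$ for every $0\leq j\leq \rho+\sigma+\varphi(a_s)-1$, where $\varphi(a_s)=\max_i\varphi(a_i)$, and the threshold becomes $\alpha_j<\rho+\sigma+\varphi(a_s)$ instead of $\rho+\sigma+1$. The extra cells restore connectivity of $Y$ and of $Z$, and both computations change: $\beta_1(Z)=2\rho+2\sigma+2\varphi(a_s)-\varphi(a_{k+1})-\varphi(a_{k+2})+2$ and $\chi(Y)=-2\rho-2\sigma-\varphi(a_s)+\sum_{i=1}^{k+2}\varphi(a_i)$, which combine to the contradiction $\sum_{i=1}^{k}\varphi(a_i)+\varphi(a_s)\leq -1$, valid precisely because the weights are normalized to be non-negative (so zero-weight generators are also handled). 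In other words, the definitions of $m_\varphi$ and concatenability do not ``absorb'' the non-unit steps; they only govern the rewriting step, which is the part that does go through unchanged. Your closing remark about maxima is correct ($H_1(G)=\Z$ forces the only surjections to be $\pm\varphi$, and $-\varphi$ exchanges minima and maxima), but it is tangential, since the statement concerns only minima.
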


For two-relator presentations of deficiency $1$ one can widely extend the result by using relative maxima or minima (see Definition \ref{relativemin}). 
 \begin{cor1}
  Let $\mathcal{P} = \langle a, b, c \ | \ r, s \rangle$ be a presentation of a group $G$ with $H_1(G)=\Z$ where the relators are cyclically reduced. Let $\varphi : G \to \mathbb{Z}$ be a surjective homomorphism with $\varphi(c) \neq 0$. If $r$ attains a unique relative minimum at $a$, then $G$ is locally indicable.	
 \end{cor1}

In Theorem \ref{weak} we extend our methods to investigate presentations with the homology of a wedge of circles.

Our main applications are related to LOT groups and Adian presentations. Labelled oriented trees  give rise to homology circle presentations that generalize Wirtinger presentations for knots. It is well-known that knot groups are locally indicable (see \cite{h1}). Local indicability of LOT groups is an open problem (that would imply asphericity of the associated presentations). In fact, it is not even known whether all LOT groups are torsion-free. In \cite{h2} Howie associated to any LOT $\Gamma$ two graphs. The right graph $I(\Gamma)$ is obtained from $\Gamma$ by interchanging the initial vertex and the label of every edge in $\Gamma$. Analogously, the left graph $T(\Gamma)$ is constructed by interchanging the final vertex with the label. 
A similar construction was investigated by Gersten in \cite{ge} in the more general context of Adian presentations $\pe=\langle A \ | \ u_i=v_i \ (i\in J)\rangle$ (where $u_i$ and $v_i$ are nontrivial positive words). Howie proved that, if $I(\Gamma)$ or $T(\Gamma)$ has no cycles, then the group $G(\Gamma)$ is locally indicable (and, in particular, $\Gamma$ is aspherical) \cite[Theorem 10.1]{h2}. For Adian presentations, Gersten showed that if both $T(\pe)$ and $I(\pe)$ have no cycles then the associated $2$-complex $\kp$ is diagrammatically reducible and, in particular, aspherical \cite[Proposition 4.12]{ge}. Moreover, when $\ell(u_i)=\ell(v_i)$ for every $i$ (as in the case of LOT presentations), if either  $T(\pe)$ or $I(\pe)$ has no cycles then $\kp$ is diagrammatically reducible (and hence aspherical) \cite[Proposition 4.15]{ge}. More recently, Wise proved that the $2$-complexes associated to cycle-free Adian presentations (i.e. those for which both graphs are forests) have non-positive sectional curvature, which implies that the corresponding groups are locally indicable \cite[Theorem 11.4]{wi}.
For LOT presentations we obtain the following extension of Howie's result.
\begin{coro2}
	Let $\Gamma$ be a LOT. If either $I(\Gamma)$ or $T(\Gamma)$ has at most one cycle, then $G(\Gamma)$ is locally indicable.
\end{coro2}

In Corollary \ref{adian} we extend this result to a wider class of Adian presentations, generalizing, for a certain subfamily, Wise's result  \cite[Theorem 11.4]{wi}.

Finally, Theorem \ref{ciclosetiq} describes a strategy to study local indicability of LOT groups when both graphs   $I(\Gamma)$ and $T(\Gamma)$ have more than one cycle. This result is a consequence of Theorem \ref{weak}.

\subsection*{Acknowledgements}
We would like to thank Jim Howie and Jonathan Barmak for useful suggestions and comments during the preparation of this article.

\section{Main results}

The results of this section are based on some ideas and techniques developed by Jim Howie in the proof of \cite[Theorem 6.2]{h2} and use well-known properties of locally indicable groups. Given a presentation $\pe$ of a group $G=G(\pe)$, we denote by $\kp$ its standard $2$-complex which has a single $0$-cell, one $1$-cell for each generator and one $2$-cell for each relator. Recall from \cite{h1} that an elementary reduction $(X,Y)$ is a pair of $2$-complexes such that $Y \subset X$ and $X-Y$ consists of exactly one $1$-cell $e^1$ and at most one $2$-cell $e^2$, where $e^2$ properly involves $e^1$, which means that its attaching map is not homotopic in $Y \cup e^1$ to a map into $Y$. A $2$-complex $K$ is reducible if for every finite subcomplex $X$ there exists an elementary reduction $(X,Y)$. Also, a group presentation $\pe$ is reducible if its associated  $2$-complex $\kp$ is. We will use the following known fact.

\begin{thm}[Howie \cite{h1}] \label{red}
	If $G$ has a reducible presentation in which no relator is a proper power, then G is locally indicable.
\end{thm}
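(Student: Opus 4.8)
The statement is a known result of Howie, so I would reconstruct its proof via the tower technique of \cite{h1}, verifying the defining property of local indicability directly: every finitely generated nontrivial subgroup $H \leq G$ must surject onto $\Z$. The plan is to first model $H$ geometrically. Let $p : K_H \to \kp$ be the covering with $p_{\ast}\pi_1(K_H) = H$, and, since $H$ is finitely generated, choose a finite connected subcomplex $L \subseteq K_H$ carrying $H$, i.e. with $\pi_1(L) \twoheadrightarrow H$. Applying Howie's maximal tower lifting to the composite $L \hookrightarrow K_H \xrightarrow{p} \kp$ produces a finite connected \emph{reduced} tower $t : \hat{K} \to \kp$ (a composite of covering projections and inclusions of subcomplexes) through which the map factors, together with a $\pi_1$-surjection $L \to \hat{K}$. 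Since $p_{\ast}$ is injective and $\pi_1(L)\twoheadrightarrow H$, one has $f_{\ast}\pi_1(L)=H$, and reducedness of the tower makes $t_{\ast}$ an isomorphism onto its image, so $\pi_1(\hat{K}) \cong H$. It therefore suffices to prove that $\hat{K}$ has positive first Betti number whenever it is not a single point (the point corresponding to the excluded case $H=1$).

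Next I would transport reducibility to $\hat{K}$. Because $\hat{K}$ is assembled from $\kp$ by passing to covers and to subcomplexes, and reducibility is a statement about all finite subcomplexes that passes to covering complexes, it is inherited: $\hat{K}$ is again reducible and no attaching map of a $2$-cell is a proper power. Reducibility is exactly what guarantees that $\hat{K}$ can be peeled all the way down to a point by successive elementary reductions (an essential $2$-cycle in some cover would leave a finite subcomplex with no elementary reduction and so obstruct this). I would fix such a reduction $\hat{K} = X_m \supset X_{m-1} \supset \cdots \supset X_0 = \mathrm{pt}$, each step deleting one $1$-cell together with at most one $2$-cell properly involving it, and record the order; the requirement that the deleted $1$-cell meet at most the one deleted $2$-cell forces the cellular boundary operator $\partial_2$ of $\hat{K}$ to be triangular in the induced ordering of cells.

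The homological heart is then a short Euler-characteristic count. Peeling to a point deletes every $2$-cell, so $b_2(\hat{K}) = 0$, and writing $f$ for the number of steps that delete a $1$-cell but no $2$-cell one gets $\chi(\hat{K}) = 1 - f$, hence $b_1(\hat{K}) = f$. Thus I must show $f \geq 1$ whenever $\pi_1(\hat{K}) \neq 1$; equivalently, that a fully balanced reduction ($f = 0$) forces $\hat{K}$ to be a point. This is where the absence of proper powers is indispensable: a balanced reduction yields $H_1(\hat{K})$ finite, and were $\pi_1(\hat{K})$ nontrivial then passing to covers would eventually expose a nonzero $2$-cycle, which blocks every elementary reduction there and contradicts reducibility. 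The local prototype of the forbidden phenomenon is $\langle t \mid t^n \rangle$, whose relator is a proper power, whose $H_1 = \Z/n$ is pure torsion, and whose universal cover carries nonzero second homology; excluding proper powers is precisely what prevents a relator from wrapping around a generator and collapsing its homological contribution to torsion. Ruling this out gives $f \geq 1$, so $b_1(\hat{K}) > 0$ and $H \cong \pi_1(\hat{K})$ admits a surjection onto $\Z$.

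I expect the main obstacle to be exactly this last step: proving that reducedness of the tower together with the no-proper-power hypothesis excludes a balanced block with nontrivial (finite) first homology and produces a genuine free class rather than only torsion. Making precise the interplay with covers — that an essential $2$-cycle upstairs obstructs all elementary reductions, and that the exclusion of proper powers is what keeps the covers reducible — is the technical crux, and the case of an infinite group with finite abelianization is the most delicate subcase, handled through the diagrammatic (aspherical) behaviour forced by reducibility over all covers. The secondary technical point is the faithful identification $\pi_1(\hat{K}) \cong H$ from the reduced tower, which is what lets positivity of $b_1(\hat{K})$ translate into an infinite cyclic quotient of $H$ itself.
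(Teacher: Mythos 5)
First, a point of comparison that matters for this review: the paper does not prove Theorem \ref{red} at all. It is quoted as background, with the proof deferred to Howie \cite{h1} (it is Theorem 4.2 there, as the paper itself notes later), so your proposal can only be measured against Howie's original tower argument. You have recalled its general silhouette correctly — towers built from coverings and subcomplex inclusions, a maximal lift $f'\colon L\to \hat{K}$, and an Euler-characteristic count along a sequence of elementary reductions — but two load-bearing steps of your reconstruction are wrong or circular.

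The first is the identification $\pi_1(\hat{K})\cong H$. A tower is a composite of covering projections, which are $\pi_1$-injective, and inclusions of subcomplexes, which are not: already the circle $a$ inside the standard complex of $\langle a \mid a \rangle$ (a reducible complex with no proper-power relator) fails to be $\pi_1$-injective, so no appeal to ``reducedness'' of the tower can repair this. In general the image of $\pi_1(L)$ at the top of the tower only \emph{surjects} onto $H$ after applying $t_*$, and a surjection $\pi_1(\hat{K})\to\Z$ (or one defined on $f'_*\pi_1(L)$) does not descend along a quotient map unless it kills the kernel of $t_*$. This is exactly the difficulty Howie's machinery is organized around: the conclusion that survives passing down a tower is \emph{triviality} of a subgroup (images of trivial groups are trivial), whereas indicability does not survive taking images. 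So what you call a ``secondary technical point'' is in fact the crux, and as stated it is false. The second problem is the homological count. Triangularity of $\partial_2$ in the removal order is correct, but the diagonal entries may vanish, because ``properly involves'' is a homotopy-theoretic condition, not a homological one: the attaching map of $e^2$ can traverse $e^1$ with algebraic multiplicity zero. The torus $\langle a,b \mid aba^{-1}b^{-1}\rangle$ is reducible, has no proper-power relator, and has $b_2=1$, so your claim $b_2(\hat{K})=0$ is false; likewise $\chi(\hat{K})=1-f$ presumes a single $0$-cell, which fails for subcomplexes of covers (one can salvage the inequality $b_1\geq f$ in the one-vertex case, since $b_2$ only helps, but not your equality). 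Finally, the step you yourself flag as the main obstacle — that a fully balanced reduction forces $\pi_1(\hat{K})=1$ — is argued circularly: you invoke ``an essential $2$-cycle in some cover obstructing all elementary reductions,'' but the torus shows that an essential $2$-cycle coexists with reducibility, and whether a nontrivial $\pi_1$ with finite $H_1$ is compatible with reducibility and no proper powers is precisely the theorem being proved. In Howie's paper this point is carried by substantive additional input (an induction through elementary reductions in which $\pi_1$ is exhibited as iterated free and one-relator products, with Brodskii--Howie one-relator theory doing the algebraic work), not by a covering-space remark; similarly, the claim that reducibility passes to covers is a genuine lemma of \cite{h1}, not a formality.
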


Let $F$ be a free group on a set $A$. Recall that the total exponent of a word $S=a_1^{\varepsilon_1}\ldots a_n^{\varepsilon_n}$ (with $\varepsilon_i\in \{\pm 1\}$) is $exp(S)=\varepsilon_1+\ldots+\varepsilon_n$. The length of a word $S$ will be denoted by $\ell(S)$. An initial subword of a word  $S=a_1^{\varepsilon_1}\ldots a_n^{\varepsilon_n}$ is a word of the form  $a_1^{\varepsilon_1}\ldots a_i^{\varepsilon_i}$ for some $1\leq i\leq n$. Suppose $\pe$ is a finite presentation of a group $G$ such that the total exponents of all its relators are zero. Let $\varphi:G\to\Z$ be the surjective homomorphism which sends every generator of $\pe$ to $1\in \Z$. Let $\tilde \k$ be the infinite cyclic cover of $\kp$ determined by $\varphi$. Similarly as in \cite{h1}, we index its $0$-cells with the integers. Via $\varphi:G\to\Z$  every generator $a$ in $\pe$ determines one $1$-cell $a_j$ oriented from $j$ to $j+1$ for each $j \in \mathbb{Z}$. Given a relator $r$, we denote by $R$ the corresponding $2$-cell in $\kp$, and by $R_j$, the $2$-cell in $\tilde \k$ that covers $R$ and whose minimum traversed $0$-cell is $j$. 

For any relator $r$, the sequence of total exponents of its initial subwords represents the indices of the $0$-cells traversed by the attaching path of the corresponding $2$-cell in $\tilde \k$  starting at vertex $0$. Suppose that the minimum value attained in the sequence is $m\in\Z$. Note that this minimum can be attained more than once and that the rightmost letter of an initial subword where $m$ is attained must have exponent $-1$. This letter represents a $1$-cell joining the vertices $m$ and $m+1$ (and it is traversed backwards). The letters with exponent $1$ where the value $m+1$ is attained represent  $1$-cells oriented from $m$ to $m+1$ and they are traversed forwards. Note that if we invert the relator these roles are interchanged. This motivates the following definition.

\begin{definition}\label{defmulti}
        Let $r$ be a relator of a presentation $\pe$ and let $m$ be the minimum value of the sequence of total exponents of its initial subwords. We define the \textit{multiset of minima} $m(r)$ as the multiset that contains the letters of $r$ with exponent $-1$ that attain the value $m$ and the letters that appear with exponent $1$ in $r$ that attain the value $m+1$.
\end{definition}

    For example, if $r=a^{-1}b^{-1}cb^{-1}ab$, the total exponents are $-1,-2,-1,-2,-1,0$ and $m(r) = \{b,c,b,a\}$.

\begin{remark}
      We can formulate an analogous definition for maxima instead of minima. In fact, every result in this paper that holds for minima has also an analogous version for maxima.   
\end{remark}

\begin{definition}\label{defconcatenable}
Given multisets $A_1, \ldots, A_n$ we say that they are \textit{concatenable} if there is an ordering $A_{i_1}, \ldots, A_{i_n}$ such that, for every $1 \leq j \leq n$, there exists an element in $A_{i_j}$ with multiplicity $1$ that does not belong to the union of $A_{i_1}, \ldots, A_{i_{j-1}}$.
\end{definition}

\medskip

The following result is our first generalization of Howie's \cite[Theorem 6.2]{h2}.

\begin{thm}\label{main}
Let $\mathcal{P} = \langle a_1, \ldots, a_{k+2} \ | \ r_1, \ldots, r_k, s \rangle$ be a presentation (of deficiency $1$) of a group $G$ with $H_1(G)=\Z$  (for some $k\geq 0$), where all the relators are cyclically reduced and have total exponent $0$. If the multisets of minima $m(r_1), \ldots, m(r_k)$ are concatenable, then $G$ is locally indicable. 
\end{thm}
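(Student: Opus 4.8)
The plan is to deduce the statement from Howie's criterion (Theorem \ref{red}): it suffices to check that $\pe$ is a reducible presentation in which no relator is a proper power. The first task is therefore to rule out proper powers, and the second, which carries the real weight, is to prove reducibility of $\kp$ by working in the infinite cyclic cover $\tk$ determined by $\varphi$.

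For the proper powers, the relators $r_1,\dots,r_k$ are easy. Concatenability forces each multiset $m(r_i)$ to contain an element of multiplicity one; on the other hand, if $r_i=u^{n}$ with $n\ge 2$ then $exp(u)=0$, so the exponent sequence of $r_i$ is that of $u$ repeated $n$ times at the same levels, whence every multiplicity in $m(r_i)$ is a multiple of $n$. These are incompatible. For $s$ I would use the homology-circle hypothesis. Since $\pe$ has deficiency $1$ and $H_1(G)=\Z$, the relation sublattice $L=\langle[r_1],\dots,[r_k],[s]\rangle\subseteq\Z^{k+2}$ has rank $k+1$, the classes $[r_1],\dots,[r_k],[s]$ form a basis of $L$, and $\Z^{k+2}/L\cong\Z$ is torsion free, so $L$ is saturated. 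If $[s]=n\,w$ with $n\ge 2$ then $w\in L$ by saturation, and writing $w$ in this basis gives $(1-nc)[s]=n\sum c_i[r_i]$, impossible by independence. Hence $[s]$ is primitive and $s$ is not a proper power. (Equivalently, $\chi(\kp)=0$ together with $H_1(G)=\Z$ forces $H_2(\kp)=0$, so $\pe$ is genuinely a homology-circle presentation.)

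For reducibility I would pass to $\tk$ and read off the geometric meaning of the minima. For the lift $R_{i,j}$ whose lowest traversed $0$-cell is $j$, the edges it crosses at the bottom level (from $j$ to $j+1$) are exactly the $a_j$ with $a\in m(r_i)$, counted with multiplicity; in particular a letter of multiplicity one in $m(r_i)$ is crossed exactly once by $R_{i,j}$, which is an edge where an elementary collapse can be attempted. Given a finite subcomplex $X$, I would look at its lowest level and at the cells bottoming there: only such cells meet its edges, so among the $R$-cells present there concatenability (inherited by any subfamily) supplies a witness edge crossed exactly once and by a single $R$-cell; collapsing across it removes one $R$-cell. Iterating should strip away all $R$-cells, after which $X$ becomes a finite subcomplex of the infinite cyclic cover of the one-relator complex $\langle a_1,\dots,a_{k+2}\mid s\rangle$. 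Since $s$ is not a proper power, this one-relator complex is reducible (indeed aspherical), so the remaining subcomplex reduces as well. This mirrors Howie's induction in \cite[Theorem 6.2]{h2}, which the present argument generalizes, and Howie's result is recovered when $k=1$ with $m(r_1)=\{a\}$.

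The hard part will be the interaction between the witness edges and the relator $s$. The witness supplied by concatenability is guaranteed private only among the $R$-cells, and it may also lie on an $S$-cell (one can exhibit a cyclically reduced, non-proper-power $s$ all of whose minima have multiplicity two), so one cannot simply collapse every $R$-cell first and treat $s$ afterwards. The delicate point is thus to organize a single induction on the cells of $X$ that at each stage produces a genuine free face—sometimes an $R$-cell witnessed by concatenability, sometimes an $S$-cell provided by the one-relator aspherical structure—and that exhausts all of $X$. Here I expect the homology-circle hypothesis to be decisive: it is what should rule out a finite subcomplex of $\tk$ in which every edge is shared by at least two $2$-cells, precisely the configuration that would stall the reduction. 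Once every finite subcomplex of $\tk$ is shown to admit an elementary reduction, $\kp$ is reducible and Theorem \ref{red} yields local indicability; the analogous statement with maxima in place of minima follows by inverting all relators.
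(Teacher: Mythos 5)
Your proposal correctly disposes of the proper-power condition (the multiplicity argument for the $r_i$ and the saturated-lattice argument for $s$ are both fine, and give an alternative to the paper's citation of Adams for that point), but the heart of the proof --- reducibility --- has a genuine gap, and it is exactly the one you flag yourself. Your plan is to show that every finite subcomplex of $\tk$ admits an elementary reduction, using as free faces the bottom edges supplied by concatenability; but those edges are private only among the $R$-cells and may also be traversed by $S$-cells, and for that you offer only the hope that ``the homology-circle hypothesis should rule out'' stalled configurations, together with an appeal to the one-relator complex being ``reducible (indeed aspherical)'' (asphericity does not give Howie-reducibility, so this last step is also unjustified). No mechanism is proposed for either point, and the paper never fills this hole in the way you anticipate: it does \emph{not} prove that $\tk$ itself is reducible. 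Indeed, the direct bottom-level free-face argument you describe is essentially the proof of Theorem \ref{weak}, whose hypothesis (weak concatenability of \emph{all} relators) is imposed precisely so that that argument closes; Theorem \ref{main} puts no condition on $s$, which is why a detour is unavoidable.

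The paper's actual mechanism, absent from your proposal, is homotopy rewriting. Given a finite connected $K\subset\tk$, it embeds $K$ in a larger subcomplex $L$ and replaces $L$ by a homotopy equivalent complex $L'$ in which each $S_j$ is re-attached along an iterated rewrite $S_j^{(t)}$: since concatenability lets one write the attaching path of $R_{i,j}$ as $a_{i,j}p_{i,j}^{-1}$ with $p_{i,j}$ avoiding $a_{i,j}$, one repeatedly substitutes $p_{i,j}$ for every occurrence of $a_{i,j}$ in the attaching paths of the $S$-cells. After this, the edges $a_{i,j}$ really are free faces, so all $R$-cells collapse away, leaving a complex $L''$ whose $2$-cells are rewritten $S$-cells with pairwise distinct minimal $0$-cells. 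Two nontrivial ingredients then remain, neither of which appears in your outline: (i) the minima $\alpha_j$ of the rewrites must stabilize, which is proved by an Euler characteristic/Betti number argument in which the homology-circle hypothesis enters via $\beta_2=0$ of the infinite cyclic cover (this, not your stalling argument, is where that hypothesis is decisive); and (ii) Lemma \ref{techlemma}, which shows that a lowest edge of a rewritten cell is \emph{properly} involved --- being on only one attaching path is not by itself enough for an elementary reduction when the path crosses that edge several times. Only then does Theorem \ref{red} apply, to $L''$ rather than to a subcomplex of $\tk$.
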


We recover Howie's result \cite[Theorem 6.2]{h2} as a particular case.

\begin{corollary}[Howie \cite{h2}]
	Let $\Gamma$ be a weakly labelled oriented tree with $3$ vertices, in which at least one label is a reduced sloping word. Then $G(\Gamma)$ is locally indicable. 
\end{corollary}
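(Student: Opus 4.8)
The plan is to realize the presentation coming from $\Gamma$ as the case $k=1$ of Theorem \ref{main}. A weak LOT on the three vertices $a,b,c$ is a path with two edges, so reading off the conjugacy relation of each edge gives a deficiency-$1$ presentation $\pe = \langle a,b,c \ | \ a^{-1}w_1^{-1}bw_1, \ b^{-1}w_2^{-1}cw_2 \rangle$, where $w_1$ and $w_2$ are the two labels. After renaming the generators and reorienting the edges if necessary, I would arrange that the edge carrying the \emph{reduced sloping} label contributes the relator $r_1 = a^{-1}w_1^{-1}bw_1$, and take $s = b^{-1}w_2^{-1}cw_2$ as the distinguished relator, on which Theorem \ref{main} imposes no condition.

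Next I would discharge the routine hypotheses. Each relator is a conjugacy relation, hence has total exponent $0$; abelianizing identifies $a=b=c$, so $H_1(G)=\Z$; and the deficiency is $3-2=1$. Cyclic reducedness of $r_1$ and $s$ follows from the reducedness of the labels together with the tree structure, since no cancellation can occur at the junctions $a^{-1}\cdot w_1^{-1}$, $w_1^{-1}\cdot b$, $b\cdot w_1$ or at the cyclic junction. It then remains only to check that the single multiset $m(r_1)$ is concatenable; but for one multiset Definition \ref{defconcatenable} asks merely that $m(r_1)$ contain an element of multiplicity $1$, the union over the empty initial segment of the ordering being empty.

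The heart of the argument is to produce such a multiplicity-one element from the sloping hypothesis. Using minima when $w_1$ slopes up and maxima when it slopes down (the two cases being symmetric, as in the remark following Definition \ref{defmulti}), the defining property of a sloping word is that the total exponents of its proper nonempty initial subwords stay strictly on one side of $0$. Feeding this into $r_1 = a^{-1}w_1^{-1}bw_1$, the sequence of total exponents of the initial subwords of $r_1$ attains its minimum value at a single position, namely immediately after the initial subword $a^{-1}w_1^{-1}$, just before the letter $b$. Consequently exactly one letter of $r_1$ realizes the transition defining this minimum, and the associated generator enters $m(r_1)$.

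I expect the only delicate point to be confirming that this generator genuinely has multiplicity $1$ in $m(r_1)$: a priori the same generator could be reintroduced by another occurrence inside $w_1$ — for instance a $+1$-occurrence attaining the value $m+1$ — and one must check that the sloping condition forbids exactly this. Tracking which occurrences of a repeated letter of $w_1$ can land in $m(r_1)$ is where the sloping hypothesis is indispensable, and it is the main obstacle in the proof. Once a multiplicity-one generator is exhibited, $m(r_1)$ is concatenable, and Theorem \ref{main} yields local indicability of $G(\Gamma)$.
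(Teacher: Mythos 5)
Your overall route coincides with the paper's: realize $P(\Gamma)$ as the case $k=1$ of Theorem \ref{main}, observe that the presentation is a deficiency-$1$ homology circle, and reduce everything to showing that the single multiset $m(r_1)$ is concatenable, which by Definition \ref{defconcatenable} just means it contains an element of multiplicity $1$. The gap is in the one step that carries all the content, which you yourself flag as ``the main obstacle'' and then leave unproved; worse, it cannot be proved from your working definition of sloping. You take sloping to mean that the exponent sums of all \emph{proper} nonempty initial subwords of $w_1$ stay strictly on one side of $0$, and you claim this forces the prefix exponent-sum sequence of $r_1=a^{-1}w_1^{-1}bw_1$ to attain its minimum at a single position. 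That implication is false, because your definition allows $exp(w_1)=0$, in which case the empty prefix and $w_1$ itself tie. Concretely, take $w_1=aba^{-1}ba^{-1}b^{-1}$: every proper nonempty prefix has exponent sum $1$ or $2$, yet $exp(w_1)=0$. The relator $r_1=a^{-1}\,bab^{-1}ab^{-1}a^{-1}\,b\,aba^{-1}ba^{-1}b^{-1}$ is reduced and cyclically reduced, its prefix-sum sequence is $-1,0,1,0,1,0,-1,0,1,2,1,2,1,0$, which attains its minimum $-1$ twice and its maximum $2$ twice, and Definition \ref{defmulti} gives $m(r_1)=\{a,a,b,b\}$, with the maxima version also equal to $\{a,a,b,b\}$. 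Neither multiset has a multiplicity-one element, so neither is concatenable and Theorem \ref{main} cannot be invoked. Whatever Howie's precise definition of sloping is, it must exclude such words, since the paper's proof rests on the fact that sloping guarantees a \emph{unique} minimum or a \emph{unique} maximum in the sequence of $r$; your reconstruction of the definition does not guarantee this, and that is exactly the point where your proof breaks.

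For comparison, the paper's argument is one line once the correct property is in hand: sloping ensures the prefix-sum sequence of $r$ attains its minimum (or maximum) exactly once; a unique minimum position then contributes \emph{two} letters to $m(r)$, not ``exactly one letter'' as you write --- the letter of exponent $-1$ descending into the valley and the letter of exponent $+1$ climbing out of it --- and these two letters are all of $m(r)$ since the minimum level is visited only once. They are moreover distinct generators, because if both were the same generator $x$ then $r$ would contain the cancelling pair $x^{-1}x$ (cyclically), contradicting (cyclic) reducedness; hence both have multiplicity $1$ and $m(r)$ is concatenable. Note that it is reducedness, not the sloping condition, that rules out the coincidence you worry about at the end. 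A further, smaller, inaccuracy: your claim that cyclic reducedness of the relators ``follows from the reducedness of the labels together with the tree structure'' is false as stated --- if $w_1$ ends in $a^{-1}$ then $a^{-1}\cdot w_1^{-1}$ cancels --- so this hypothesis must either be arranged by normalizing the weak LOT or absorbed into the assumptions, as the paper does.
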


The corollary follows from the fact that the standard presentation of the group of a weakly labelled oriented tree with three vertices $\pe=\langle a,b,c \ | \ r,s \rangle$ is a homology circle and the sloping word condition assures that $m(r)$ attains a unique minimum or a unique maximum in its sequence. In particular, it is concatenable. Note that, in fact, for a two-relator presentation  $\pe=\langle a,b,c \ | \ r,s \rangle$  it is not required that $m(r)$ has a unique minimum in order to be concatenable. We just need that there exists a generator that appears only once in $m(r)$. In Corollary \ref{genho} we will show a generalization of this result for two-relator presentations.

 The proof of Theorem \ref{main} uses the following technical lemma. 

\begin{lemma}\label{techlemma}
	Let $F$ be a free group on a set $A$ and $S$ a cyclically reduced word in $F$ with $exp(S)=0$. Then $S$ cannot be factored as a product 
	$$S = w_0S_0w_0^{-1}w_rS_rw_r^{-1}w_{r-1}S_{r-1}w_{r-1}^{-1}\cdots w_1S_1w_1^{-1}$$
	with $w_0,w_1,\ldots, w_r,S_0,S_1,\ldots,S_r \in F\ (r\geq 0)$, satisfying the following conditions:
	\begin{itemize}
		\item $exp(S)=exp(S_i)=0$ for $0 \leq i \leq r$, 
         \item $exp(w_i)\geq 1$ for $1 \leq i \leq r$ and $exp(w_0)=1$,
		\item the total exponents of all the initial subwords of $S, S_i$ and $w_i$ are non-negative $(0 \leq i \leq r)$.
\end{itemize}
		\end{lemma}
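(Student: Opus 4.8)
The plan is to assume such a factorization exists and derive a contradiction, reading every word through its height function: to $W=b_1^{\delta_1}\cdots b_n^{\delta_n}$ I attach the path $h_j=\delta_1+\cdots+\delta_j$. The three bulleted conditions then say exactly that the paths of $S$, of each $S_i$ and of each $w_i$ never go below $0$, that $S$ and the $S_i$ return to height $0$, and that each $w_i$ ends at height $\exp(w_i)\ge 1$ while $\exp(w_0)=1$. I would record two elementary facts. First, a free reduction deletes a backtrack and hence only erases heights from a path, so the reduced word of a non-negative path is again non-negative. Second, a nonempty cyclically reduced word of exponent $0$ whose prefixes are all non-negative must begin with a letter of exponent $+1$ (its one-letter prefix is non-negative) and end with a letter of exponent $-1$ (the prefix consisting of all but the last letter is non-negative and the whole word has exponent $0$); cyclic reducedness then forces these two extreme letters to involve different generators.

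I would treat $r=0$ first, as it already contains the key mechanism. Here $S=w_0S_0w_0^{-1}$. The first letter of the whole product is the first letter of $w_0$, which is positive and, being leftmost, survives every reduction; so it is the first letter of $S$. Now look at the closing $w_0^{-1}$. If the cancellation between $S_0$ and $w_0^{-1}$ does not consume all of $w_0^{-1}$, then the last letter of $S$ is the last letter of $w_0^{-1}$, namely the inverse of the first letter of $w_0$; writing $x$ for that first letter, $S$ would begin with $x$ and end with $x^{-1}$, contradicting cyclic reducedness. If instead all of $w_0^{-1}$ is cancelled, then $S_0$ must end with $w_0$, say $S_0=S_0'' w_0$; but $S_0''$ is then a prefix of $S_0$, so $\exp(S_0'')\ge 0$, while $\exp(S_0'')=\exp(S_0)-\exp(w_0)=-1$, a contradiction. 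Either way $r=0$ is impossible.

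For $r\ge 1$ I would argue by induction on $r$ (a minimal counterexample). In the formal product each block $w_iS_iw_i^{-1}$ has exponent $0$, so every block boundary sits at height $0$, and since each block opens with a $+1$ step and closes with a $-1$ step, each internal boundary is a valley $1,0,1$ of the path. The same computation as in the base case shows the outer letter of $w_0$ cannot be annihilated by cancelling $w_0^{-1}$ into $S_0$ (that would force a prefix of $S_0$ of exponent $-1$); consequently this letter must instead cancel forward into the next block, which happens precisely when $w_0$ and the conjugator of that block begin with the same letter. Following this cancellation rewrites the two adjacent conjugates $w_0S_0w_0^{-1}\,w'S'w'^{-1}$ as a single conjugate $w(S_0\,v\,S'\,v^{-1})w^{-1}$ of the same shape, where $w$ is the common initial segment and $v$ is the leftover; this drops $r$ and contradicts minimality.

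The main obstacle I expect is the bookkeeping in this merging step. One must show that the cancellation propagates consistently through the shared initial segment of the two conjugators, that the merged central word $S_0\,v\,S'\,v^{-1}$ again has exponent $0$ and non-negative prefixes, and that the merged conjugator keeps exponent $\ge 1$ and non-negative prefixes. The genuinely delicate point is excluding the alternative in which an internal valley simply survives, producing a proper initial subword of $S$ of exponent $0$: ruling this out, so that every valley is forced to collapse, is exactly where cyclic reducedness together with the non-negativity hypotheses must be pushed hardest, and it is the crux on which the whole induction rests.
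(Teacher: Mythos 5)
Your base case is fine in substance (the two contradictions you reach --- violation of cyclic reducedness, or a prefix of $S_0$ of exponent $-1$ --- are the right ones, although ``being leftmost, survives every reduction'' needs an actual argument). But the induction step rests on the claim you yourself flag as the crux: that every internal valley must collapse. This is a genuine gap, and it cannot be closed, because a surviving valley is consistent with every stated hypothesis: it merely gives $S$ a proper initial subword of exponent $0$, which is allowed. In fact the statement as literally written is false. Take $r=1$, $w_0=a$, $S_0=bc^{-1}$, $w_1=b$, $S_1=ca^{-1}$; then
\[
S \;=\; a\,(bc^{-1})\,a^{-1}\cdot b\,(ca^{-1})\,b^{-1}\;=\;abc^{-1}a^{-1}bca^{-1}b^{-1}
\]
is reduced as written and cyclically reduced, $exp(S)=exp(S_0)=exp(S_1)=0$, $exp(w_0)=exp(w_1)=1$, and all initial subwords of $S,S_0,S_1,w_0,w_1$ have non-negative exponent --- every bullet holds, yet the factorization exists. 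This is exactly your ``all valleys survive'' configuration, so no amount of cancellation bookkeeping can rule it out. What saves the lemma in its intended use is an extra hypothesis the statement does not record: in the paper the generators are $1$-cells of an infinite cyclic cover, so each generator occurs at a single level, i.e.\ all occurrences of a fixed generator are preceded by prefixes of one and the same exponent. In the example above, $a$ and $b$ each occur at two different levels, which is precisely what makes it work.

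For comparison, the paper's mechanism is different from yours: it takes a factorization (of $S$ or $S^{-1}$) minimizing $\sum_i(\ell(w_i)+\ell(S_i))$, passes to the conjugated identity
\[
w_0^{-1}\,S\,w_1S_1^{-1}w_1^{-1}\cdots w_rS_r^{-1}w_r^{-1}\,w_0\;=\;S_0,
\]
and observes that the last letter of $S$ (a $-1$ step, sitting at cumulative exponent $-1$ on the left-hand side) cannot survive into $S_0$, whose prefixes are non-negative; so it is \emph{forced} to cancel, and a five-case analysis of where its partner lies yields either an exponent contradiction or a strictly shorter factorization. That forced cancellation is the device that replaces your unprovable ``valleys must collapse'' step. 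Note, however, that your problematic configuration is also the one that slips through the paper's analysis: in the example the partner of the last letter of $S$ is the first letter of $w_1$ (the paper's Case 2 with $i=1$ and $U$ empty), where the ``strictly shorter writing'' coincides with the original one. So your instinct about where the difficulty sits is exactly right; the repair must come from the unstated single-level hypothesis (under which either argument can be pushed through), not from a finer local analysis of cancellations.
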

\begin{proof}
	Suppose there exists such a factorization for some cyclically reduced word $S$. Over all possible factorizations of $S$ and $S^{-1}$ satisfying these conditions, we consider the cyclically reduced words $w_0,w_1,\ldots, w_r,S_0, S_1,\ldots,S_r$ that minimize the sum $$L = \sum_{i=0}^r (\ell(w_i)+\ell(S_i)).$$
	We can assume that the factorization that minimizes $L$ is one of $S$. Otherwise, we change $S$ by $S^{-1}$. We rewrite the equality as $$w_0^{-1}Sw_1S_1^{-1}w_1^{-1}w_{2}S_{2}^{-1}w_{2}^{-1}\ldots w_rS_r^{-1}w_r^{-1}w_0 = S_0.$$
		Note that $S_i^{-1}$ satisfies the same properties as $S_i$ for every $0 \leq i \leq r$.
	
	Let $c$ be the rightmost letter in $S$. Note that it must appear as $c^{-1}$ (with negative exponent) since the initial subwords of $S$ have non-negative total exponent and $exp(S)=0$. The key observation is that this appearance of $c^{-1}$ in $S$ must cancel with a $c$, since, in any other case, $w_0^{-1}S$ would be an initial subword of $S_0$ with total exponent $-1$, which contradicts the hypotheses. There are five types of possible cancellations for $c^{-1}$.

	\noindent
	\textbf{Case 1.} The rightmost letter $c^{-1}$ in $S$ cancels with a $c$ in some $S_i^{-1}$ with $i \geq 1$. In this case we can write $S_i^{-1} = UcV$ for some (possibly empty) words $U$ and $V$. The word located between the letter $c^{-1}$ in $S$ and the $c$ in $S_i^{-1}$ must be trivial, so  $w_1S_1^{-1}w_1^{-1}\ldots w_{i-1}S_{i-1}^{-1}w_{i-1}^{-1}w_iU$ is trivial, and in particular this word has total exponent $0$. Since the total exponent of $w_1S_1^{-1}w_1^{-1}\ldots w_{i-1}S_{i-1}^{-1}w_{i-1}^{-1}$ is $0$ and $exp(w_i) \geq 1$, we have that $exp(U) \leq -1$, but this is a contradiction since the subwords of $S_i^{-1}$ have non-negative total exponent.

	\noindent
	\textbf{Case 2.} The rightmost letter $c^{-1}$ in $S$ cancels with a $c$ in some $w_i = UcV$ with $i \geq 1$. In this case, the hypothesis implies that $w_1S_1^{-1}w_1^{-1}\ldots w_{i-1}S_{i-1}^{-1}w_{i-1}^{-1}U$ is trivial, so we can erase this subword from the original equality, replace $w_i^{-1}=V^{-1}c^{-1}U^{-1}$, and obtain $$w_0^{-1}ScVS_i^{-1}V^{-1}c^{-1}U^{-1}w_{i+1}\ldots w_rS_r^{-1}w_r^{-1}w_0 = S_0.$$
	Note that $exp(cV)=1$, since $exp(w_i)=exp(UcV)\geq 1$ and $exp(U)=0$. Also, the total exponents of the initial subwords of $cV$ are non-negative because this property holds for $w_i$ and $exp(U)=0$. If $U$ is empty, we obtain a strictly shorter writing of $S$  where $cV$ plays the role of $w_i$, which is a contradiction. If $U$ is not empty, we can replace in the last equality $U^{-1} = w_1S_1^{-1}w_1^{-1}\ldots w_{i-1}S_{i-1}^{-1}w_{i-1}^{-1}$ obtaining a new factorization for $S$ where $cV$ plays the role of $w_i$. Since $cV$ has a strictly shorter length than $w_i$, this would be a contradiction.

	\noindent
	\textbf{Case 3.} The rightmost letter $c^{-1}$ in $S$ cancels with a $c$ in some $w_i^{-1} = V^{-1}cU^{-1}$ with $i \geq 1$. This is similar to Case 2.

	\noindent
	\textbf{Case 4.} The rightmost letter $c^{-1}$ in $S$ cancels with a $c$ in $w_0^{-1} = V^{-1}cU^{-1}$. Note that this condition implies $cU^{-1}=S^{-1}$. After replacing, we get $$V^{-1}w_1S_1^{-1}w_1^{-1}\ldots w_rS_r^{-1}w_r^{-1}SV = S_0.$$ Since $exp(w_0)=exp(SV)=1$ and $exp(S)=0$ we get that $exp(V)=1$. Also, the total exponents of initial subwords of $V$ are non-negative because $w_0$ satisfies that condition and $exp(S)=0$. Taking inverses at both sides of the equality we get
	$$V^{-1}S^{-1}w_rS_rw_r^{-1}\ldots w_1S_1w_1^{-1}V = S_0^{-1},$$ 
	and since $S$ and the $S_i$ have total exponent $0$, this provides a factorization of $S^{-1}$ satisfying the conditions, with a strictly smaller value of $L$, which is again a contradiction.

	\noindent
	\textbf{Case 5.} The rightmost letter $c^{-1}$ in $S$ cancels with a $c$ in $w_0 = UcV$. This is similar to Case 4.
\end{proof}

\begin{proof}[Proof of Theorem \ref{main}]
We follow the proof of \cite[Theorem 6.2]{h2}. If $k=0$, $G$ is a torsion-free one-relator group and the result follows from \cite{br} (see also \cite[Corollary 4.3]{h1}). Now suppose that $k\geq 1$. Let $R_1, \ldots, R_k, S$ be the $2$-cells of $\k_{\mathcal{P}}$ corresponding to $r_1, \ldots, r_k$ and $s$ respectively. Let  $\varphi: G \to \mathbb{Z}$ be the (surjective) homomorphism that sends every generator to $1$ and let $\tilde\k$ be the corresponding infinite cyclic cover of $\kp$. It suffices to prove that $\pi_1(\tk)$ is locally indicable. 

 We index the $0$-cells of $\tilde\k$ with the integers and denote $a_{i,j}$ the $1$-cell that covers $a_i$ and joins the $0$-cells $j$ and $j+1$. It is oriented from $j$ to $j+1$. We denote by $R_{i,j}$ and $S_j$ the $2$-cells that cover $R_i$ and $S$ respectively for which the minimum $0$-cell is $j$. Since $\pi_1(\tk)$ is the direct limit of the fundamental groups of its finite connected subcomplexes, it suffices to prove that for every finite connected subcomplex $K\subset \tk$, there is connected subcomplex $L\subset \tk$ with $K \subset L$ and $\pi_1(L)$ locally indicable. The strategy is to include $K$ in a finite subcomplex $L$ that is homotopy equivalent to a $2$-complex $L'$, which in turn collapses to a reducible $2$-complex and then use Theorem \ref{red}.

We define first the iterated rewrites of the cells $S_l$. Since $m(r_1), \ldots, m(r_k)$ are concatenable, we can assume without loss of generality that $a_i \in m(r_i)$ with multiplicity $1$ and $a_i \notin \cup_{j=1}^{i-1} m(r_j)$ ($1 \leq i \leq k$). Equivalently, for every $1 \leq i \leq k$, $a_{i,0}$ appears exactly once in the attaching path of $R_{i,0}$ and does not appear in the attaching path of $R_{j,0}$ for any $j < i$. Also $a_{i,0}$ passes through the $0$-cell $0$ once. We can then write the attaching path of $R_{i,0}$ as $a_{i,0}p_{i,0}^{-1}$ where $p_{i,0}$ is a path which does not involve $a_{i,0}$. Let $S_0^{(1)}$ be the path obtained after replacing, for each $1\leq i \leq k$ in decreasing order, every ocurrence of $a_{i,0}$ in the attaching path of $S_0$ by $p_{i,0}$ and cyclically reducing. Note that $S_0^{(1)}$ does not pass through $a_{i,0}$ for any $1 \leq i \leq k$. Analogously, $a_{i,1}$ appears just once in the attaching path of $R_{i,1}$ and so it can be written as $a_{i,1}p_{i,1}^{-1}$ where $p_{i,1}$ is a path that does not involve $a_{i,1}$. Let $S_0^{(2)}$ be the path obtained after replacing in $S_0^{(1)}$ every occurrence of $a_{i,1}$ by $p_{i,1}$, for each $1 \leq i \leq k$ in decreasing order, and cyclically reducing. Denote $S_1^{(1)}$ the path obtained by doing the same in $S_1$. By repeating this procedure, we define the iterated rewrite $S_l^{(j)}$ for each $l \geq 0$ and $j \geq 1$. 

Let $\alpha_1$ be the minimum $0$-cell traversed by $S_0^{(1)}$. Note that $\alpha_1\geq 0$. In general, for every $j$, we denote by  $\alpha_j$ the minimum $0$-cell of $S_0^{(j)}$. Note that $\alpha_j\geq \alpha_{j-1}$ and that the minimum  $0$-cell of $S_l^{(j)}$ is $\alpha_j+l$. We will show first that for $j\in\N$ big enough, the sequence  $\{\alpha_j\}_{j\geq 1}$ stabilizes. Since the sequence is increasing, we only need to prove that it is bounded above. We show that $\alpha_n<\rho+\sigma+1$ for every $n$, where $\rho = \sum_{i=1}^k \rho_i$ and  $\rho_1, \ldots, \rho_k, \sigma$ are the maximum $0$-cells traversed by the attaching paths of $R_{1,0}, \ldots,$ $R_{k,0}, S_0$ respectively.

Suppose that there exists $j\in\N$ such that $\alpha_j \geq \rho+\sigma+1$. Take $n\in\N$ big enough such that  $\alpha_{n-\rho-\sigma} \geq \rho+\sigma+1$. Let $Y$ be the full subcomplex of $\tk$ containing all $0$-cells $j$ for  $0 \leq j \leq n+\rho+\sigma$. This complex contains the $1$-cells $a_{i,j}$ for each $1\leq i \leq k+2$ and $0 \leq j \leq n+\rho+\sigma-1$, the $2$-cells $R_{i,j}$ for $1 \leq i \leq k$ and $0\leq j \leq n+\sigma+\rho-\rho_i$ and the $2$-cells $S_j$ with $0 \leq j \leq n+\rho$. Note that the number of $0$-cells in $Y$ is $n+\rho+\sigma+1$, the number of $1$-cells is $(k+2)(n+\rho+\sigma)$, for each $1\leq i \leq k$ it has $n+\sigma+\rho-\rho_i+1$ $2$-cells corresponding to the $R_{i,j}$ with $0 \leq j \leq n+\sigma+\rho-\rho_i$, and $n+\rho+1$ $2$-cells corresponding to the $S_j$ with $0 \leq j \leq n+\rho$. Therefore its Euler characteristic is $\chi(Y)= -\rho - \sigma + k+2$.

We construct a $2$-complex $Y'$ homotopy equivalent to $Y$ by replacing each $S_j$ by a $2$-cell attached via the path $S_j^{(n-j)}$ ($0\leq j \leq \rho+\sigma$). Note that this $2$-complex collapses to a subcomplex $Y''$ by removing $R_{i,j}$ and $a_{i,j}$ for every $1 \leq i \leq k$ and $0 \leq j \leq \rho + \sigma$. Let $Z$ be the $1$-subcomplex of $Y''$ that contains the $0$-cells from $0$ to $\rho + \sigma+1$ and the $1$-cells $a_{i,j}$  with $i=k+1,k+2$ and $0 \leq j \leq \rho+\sigma$. Since, for every $j$, the minimum $0$-cell of $S_j^{(n-j)}$ is $j+\alpha_{n-j} \geq j+\alpha_{n-\rho-\sigma}\geq j+\rho+\sigma+1$, the identity $Z \mapsto Z$ can be continuously extended to a retraction $Y'' \mapsto Z$ which sends all of $Y''-Z$ to the $0$-cell $\rho+\sigma+1$. It follows that 
$\beta_1(Y) = \beta_1(Y'') \geq \beta_1(Z)= \rho+\sigma +1$. Here $\beta_1$ denotes the first Betti number. Note that $\beta_0(Y)=1$ since $Y$ is connected. Note also that  $\kp$ is a homology circle because $H_1(\kp)=H_1(G)=\Z$ and the deficiency of the presentation is $1$. This implies that $\beta_2(Y) = 0$ since $\tk$ is an infinite cyclic cover (see \cite[Proposition 1]{ad1}). It follows that $\chi(Y) \leq -\rho-\sigma$, which is a contradiction. Therefore, the sequence $\{ \alpha_j\}_{j\geq 1}$ is bounded above by $\rho+\sigma+1$.

Now let $K\subset \tk$ be a finite connected subcomplex. We can assume that the minimum $0$-cell in $K$ is $0$. Let $q$ be the maximum $0$-cell in $K$ and take $n$ large enough such that $\alpha_i = \alpha$ for every $i\geq n$. Take $L$ to be the subcomplex of $\tk$ containing every $0$-cell between $0$ and $q+n+\rho-\sigma$, every $1$-cell $a_{i,j}$ with $1 \leq i \leq k+2$ and $0 \leq j \leq q+n+\rho-\sigma-1$, the $2$-cells $R_{i,j}$ with $1 \leq i \leq k$ and $0 \leq j \leq q+n+\rho-\rho_i-\sigma$ and the $2$-cells $S_j$ with $0 \leq j \leq q-\sigma$. Note that $K\subset L$. 

In order to prove that $\pi_1(L)$ is locally indicable, we replace each $S_j$ by a $2$-cell attached via $S_j^{(q+n-\sigma-j)}$ and obtain a new $2$-complex $L'$ homotopy equivalent to $L$ which collapses to a $2$-subcomplex $L''$ by removing every $2-$cell $R_{i,j}$ and every $1$-cell $a_{i,j}$ for each $1 \leq i \leq k$ and $0 \leq j \leq q+n+\rho-\rho_i-\sigma$. Note that the $2$-complex $L''$ is reducible: for $0 \leq j \leq q-\sigma$, the minimum $0$-cell of $S_j^{(q+n-\sigma-j)}$ is $j+\alpha$. The $2$-cell  $S_j^{(q+n-\sigma-j)}$ involves any of the $1$-cells in its attaching path incident to the $0$-cell $j+\alpha$ and these $1$-cells are not in the attaching paths of the remaining $2$-cells  $S_l^{(q+n-\sigma-l)}$ (for $l>j$). Some of these $1$-cells are properly involved by Lemma \ref{techlemma}. It follows that $\pi_1(L) = \pi_1(L'')$ is locally indicable by Theorem \ref{red} (see also \cite[Theorem 4.2]{h1}). Note that the no-proper-power condition follows from \cite[Proposition 1]{ad1} and the fact that $\tk$ is an infinite cyclic cover of a homology circle $2$-complex (see also \cite[p. 290]{h2}).
\end{proof}


\begin{ej}
    Consider $$\mathcal{P} = \langle a, b, c, d \ | \ a^{-1}c^{-1}b^{-1}a^{-1}badcb^{-1}d \ , \  c^{-1}a^{-1}b^{-1}d^{-1}badcb^{-1}d \ , \  a^{-1}b^{-1}c^{-1}acdb^{-1}c^{-1}ad \rangle$$
    Name the relators $r_1, r_2$ and $r_3$ respectively. Note that $m(r_1) = \{a,b\}$ and $m(r_2) = \{d,b\}$. Theorem \ref{main} (with $a_1=a$ and $a_2=d$) proves that $G(\pe)$ is locally indicable.
\end{ej}

Theorem \ref{main} can be generalized to presentations $\pe$ where the total exponents of the relators are not necessarily $0$. Let $\pe$ be a presentation and $\varphi: G(\pe) \to \mathbb{Z}$ a surjective homomorphism.  By changing, if necessary, a generator by its inverse, we can assume that $\varphi(a)$ is non-negative for every generator $a$ of $\pe$. Given a word $w$ on the generators, the weight of $w$ (with respect to $\varphi$) is $\varphi(w)$ (where $w$ is viewed as an element of $G(\pe)$). Similarly as before, we can consider the sequence of weights (with respect to $\varphi$) of initial subwords of a relator $r$ and define the multiset $m_{\varphi}(r)$ of minima of $r$. If the minimum of the sequence is $m$, $m_{\varphi}(r)$ contains a copy of the letter $a$ for every initial subword $wa^{-1}$ with weight $m$ and every initial subword $wa$ with weight $m+\varphi(a)$. In particular, if $\varphi$ sends every generator to $1$, $m_{\varphi}(r)=m(r)$. For example, if $\mathcal{P}= \langle a,b,c \ | \ abc^{-1}b^2, ab^{-3}a \rangle$ we can define $\varphi:G(\pe)\to\Z$ by $\varphi(a) = 3$, $\varphi(b) = 2$ and $\varphi(c)=9$. In this case, the sequence for the relator $r=abc^{-1}b^2$ is $3, 5, -4, -2, 0$.

\begin{thm}\label{pesos}
    Let $\mathcal{P} = \langle a_1, \ldots, a_{k+2} \ | \ r_1, \ldots, r_k, s \rangle$ be a presentation (of deficiency $1$) of a group $G$ with $H_1(G)=\Z$ (for some $k\geq 0$), where all the relators are cyclically reduced. If there is a surjective homomorphism  $\varphi : G \to \mathbb{Z}$ such that $m_{\varphi}(r_1), \ldots, m_{\varphi}(r_k)$ are concatenable, then $G$ is locally indicable.
\end{thm}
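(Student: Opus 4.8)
The plan is to run the proof of Theorem \ref{main} with essentially no change in structure, but working in the infinite cyclic cover $\tk$ of $\kp$ determined by the given $\varphi$ instead of the one determined by the homomorphism sending each generator to $1$. First I would fix the cell structure of $\tk$: index the $0$-cells by $\Z$ and observe that each generator $a$ now lifts to $1$-cells joining the $0$-cell $j$ to the $0$-cell $j+\varphi(a)$. With this indexing the $0$-cells traversed by the lift of a relator's attaching path are exactly the weights of its initial subwords, so, exactly as in the paragraph preceding Definition \ref{defmulti}, the multiset $m_{\varphi}(r)$ is precisely the multiset of labels of the $1$-cells incident to the \emph{minimum} $0$-cell of the lift $R_{i,0}$ (including loops coming from weight-zero generators sitting at that level). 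Consequently the hypothesis that $m_{\varphi}(r_1),\dots,m_{\varphi}(r_k)$ are concatenable carries the same geometric meaning as in Theorem \ref{main}: after reindexing the relators we may assume that $a_{i,0}$ occurs exactly once among the $1$-cells incident to the minimum $0$-cell of $R_{i,0}$ and does not occur among those of $R_{j,0}$ for $j<i$. This is all that was needed to define the iterated rewrites $S_l^{(j)}$.

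With this dictionary in place I would reproduce the three stages of the proof of Theorem \ref{main} unchanged in form: (i) the construction of the iterated rewrites $S_l^{(j)}$ obtained by substituting the paths $p_{i,\cdot}$ for the unique-minimum $1$-cells $a_{i,\cdot}$; (ii) the proof that the sequence $\{\alpha_j\}$ of minimum $0$-cells of the $S_0^{(j)}$ is bounded above and hence stabilizes, via the Euler characteristic of a suitable full subcomplex $Y$ together with the vanishing of its second Betti number, which still holds because $\kp$ is a homology circle and $\tk$ is an infinite cyclic cover of it (\cite[Proposition 1]{ad1}); and (iii) for a given finite connected $K\subset\tk$, the inclusion of $K$ into a finite subcomplex $L$ homotopy equivalent to a complex $L'$ that collapses to a reducible $L''$, followed by an appeal to Theorem \ref{red}. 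The only bookkeeping that genuinely changes is the cell-counting in the Euler characteristic computation, since a $1$-cell covering $a_i$ now spans an interval of length $\varphi(a_i)$ rather than $1$; this alters only the numerical constants (the $\rho_i$, $\sigma$ and the resulting value of $\chi(Y)$), not the shape of the inequality that forces the contradiction, so it is routine.

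The genuine new ingredient, and the step I expect to be the main obstacle, is a weighted version of Lemma \ref{techlemma}: one must show that a cyclically reduced word $S$ with $\varphi(S)=0$ admits no factorization of the form appearing in that lemma once ``total exponent'' is replaced throughout by ``$\varphi$-weight'' and ``non-negative total exponent of initial subwords'' by ``non-negative weight of initial subwords''. This is exactly what certifies that at least one of the $1$-cells incident to the minimum $0$-cell of $S_j^{(\cdot)}$ is \emph{properly} involved. I would attempt the same argument as for Lemma \ref{techlemma}, fixing a distinguished letter of $S$ that witnesses the minimum and ruling out each of the five cancellation patterns with $\varphi$-weights in place of exponents. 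The delicate point is that generators of weight $0$ do not move the $0$-cell index, so such a letter may sit at the minimum level without forcing the sign behaviour exploited in the original proof; ``the rightmost letter attaining the minimum'' must therefore be replaced by ``the rightmost letter whose lift is incident to, and leaves, the minimum $0$-cell'', chosen so that the weight-drop argument of Case~1 and the length-minimality arguments of Cases~2--5 still apply verbatim. Once this weighted lemma is in hand, the reducibility of $L''$ and the no-proper-power condition follow exactly as in Theorem \ref{main}, and Theorem \ref{red} gives that $\pi_1(\tk)$, and hence $G$, is locally indicable.
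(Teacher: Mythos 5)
Your proposal follows the same route as the paper's proof: pass to the cover $\tk$ determined by $\varphi$, rewrite the cells $S_j$ using concatenability, stabilize the minima $\alpha_j$ by an Euler characteristic argument, and conclude via reducibility and Theorem \ref{red}. But there is a genuine gap at exactly the step you declare routine. When some $\varphi(a_i)>1$, connectivity of the subcomplexes used in your stage (ii) can fail, and this is not a matter of numerical constants. Concretely, the graph $Z$ onto which one retracts consists of the $0$-cells in an interval together with the lifts of only the two generators $a_{k+1},a_{k+2}$; surjectivity of $\varphi$ forces the gcd of \emph{all} the weights to be $1$, but not of these two, so if $\varphi(a_{k+1})$ and $\varphi(a_{k+2})$ are both even, $Z$ splits into components according to parity. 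Then the retraction of $Y''$ onto $Z$ that collapses everything beyond the interval to a single $0$-cell need not exist: a $1$-cell straddling the cut has one endpoint in $Z$, which must stay fixed, and its other endpoint is sent to the basepoint, so continuity requires a path inside $Z$ joining them. Likewise $\beta_0(Y)=1$ is no longer automatic, and $\beta_1(Z)=E-V+1$ must be replaced by $E-V+c$ with $c$ the number of components. The paper's proof of Theorem \ref{pesos} consists essentially of the fix for this: $Y$ is \emph{not} a full subcomplex of $\tk$, but is obtained from one by attaching an extra $1$-cell joining $j$ and $j+1$ for each $0\leq j\leq \rho+\sigma+\varphi(a_s)-1$, where $\varphi(a_s)=\max_i\varphi(a_i)$. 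This restores connectivity of $Y$ and $Z$, the bound becomes $\alpha_j<\rho+\sigma+\varphi(a_s)$, and the two computations of $\chi(Y)$ then clash in the inequality $\sum_{i=1}^{k}\varphi(a_i)+\varphi(a_s)\leq -1$. The repair is short, but it is a modification of the complex, not bookkeeping, and your argument as written breaks without it.

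Concerning the weighted analogue of Lemma \ref{techlemma}: you are right that it is needed (the paper passes over this, saying only that the proof ``proceeds similarly''), but you misplace the difficulty. If every generator has positive weight, the weighted lemma follows at once from Lemma \ref{techlemma} itself: substitute each generator $a$ by a product of $\varphi(a)$ fresh letters. This substitution sends (cyclically) reduced words to (cyclically) reduced words, converts $\varphi$-weights into total exponents, and the exponents of initial subwords of an image word interpolate monotonically between the weights of initial subwords of the original, so all the hypotheses are preserved and a weighted factorization would produce a forbidden unweighted one. The only genuinely new case is that of a generator of weight $0$, and there your sketch does not go through ``verbatim'': if a weight-zero suffix $z$ separates the tracked letter $c^{-1}$ from the end of $S$, then in Case 2 the trivial word between the cancelling pair is $zMU$ rather than $MU$ (with $M=w_1S_1^{-1}w_1^{-1}\cdots w_{i-1}S_{i-1}^{-1}w_{i-1}^{-1}$), and the surgery replaces the conjugator $UcV$ by the reduction of $z^{-1}cV$; since $\ell(z)$ may exceed $\ell(U)$, minimality of $L$ yields no contradiction. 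So the weight-zero case requires a genuinely new argument (or must be excluded), and it remains open in your write-up, as you yourself anticipate.
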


\begin{proof}
    We adapt the proof of Theorem \ref{main} to the more general setting. Let $\tk$ be the infinite cyclic cover of $\kp$ determined by $\varphi:G\to\Z$. The $1$-cells $a_{i,j}$ are oriented from $j$ to $j+\varphi(a_i)$ (recall that we assume that the weights $\varphi(a_i)$ are non-negative). As before, $\rho_i$ and $\sigma$ are the maximum $0$-cells of $R_{i,0}$ and $S_0$ respectively, where $R_{i,j}$ and $S_j$ are the $2$-cells of $\tk$ covering $R_i$ and $S$ with minimum $0$-cell $j$. 
    
    We will assume again that $a_i \in m_{\varphi}(r_i)$ with multiplicity $1$ and $a_i \notin \cup_{j=1}^{i-1} m_{\varphi}(r_j)$. This allows us to write the attaching path of $R_{i,j}$ as $a_{i,j}p_{i,j}^{-1}$ where $p_{i,j}$ is a path that does not involve $a_{i,j}$. Let $S_j^{(t)}$ be the iterated rewrite defined as in the proof of Theorem \ref{main} and again let $\alpha_j$ be the minimum $0$-cell of $S_0^{(j)}$.
    
    In order to prove that the sequence  $\{\alpha_j\}_{j\geq 1}$ stabilizes one has to consider a slightly different complex $Y$. We show that $\alpha_j< \rho+\sigma+\varphi(a_s)$ where $\varphi(a_s) = \max\{\varphi(a_i)\}$. Suppose $\alpha_j \geq \rho+\sigma+\varphi(a_s)$ for some $j$.  Take $n\in\N$ big enough such that  $\alpha_{n-\rho-\sigma} \geq \rho+\sigma+\varphi(a_s)$. Let  $\tilde Y$ be the full subcomplex of $\tk$ containing the $0$-cells between $0$ and $n+\rho+\sigma$. It contains the $1$-cells $a_{i,j}$ for $1\leq i \leq k+2$, $0 \leq j \leq n+\rho+\sigma - \varphi(a_i)$, the $2$-cells $R_{i,j}$ for $1 \leq i \leq k$ and $0 \leq j \leq n+\rho+\sigma-\rho_i$ and the $2$-cells $S_j$ for $0\leq j \leq n+\rho$. The complex $Y$ is obtained from $\tilde Y$ by attaching an extra $1$-cell between $j$ and $j+1$ for every $0 \leq j \leq \rho+\sigma+\varphi(a_s)-1$.

Now we construct a homotopy equivalent $2$-complex $Y'$ by replacing each $S_j$ by a $2$-cell attached via the path $S_j^{(n-j)}$ for each $0\leq j \leq \rho+\sigma$. Note that $Y'$ collapses to a subcomplex $Y''$ after removing $a_{i,j}$ and $R_{i,j}$ for every $1 \leq i \leq k$ and $0 \leq j \leq \rho + \sigma + \varphi(a_s) - \varphi(a_i)$. 
    The attaching of the extra $1$-cells guarantees that the full subcomplex $Z$ of $Y''$ that contains the $0$-cells between $0$ and $\rho+\sigma+\varphi(a_{s})$ is connected, and so is $Y$. Note that $Z$ is a $1$-subcomplex since  $\alpha_{n-\rho-\sigma} \geq \rho+\sigma+\varphi(a_s)$. By a similar argument to that of Theorem \ref{main}, $\beta_1(Y) \geq \beta_1(Z)= 2\rho+2\sigma + 2\varphi(a_s) - \varphi(a_{k+1}) - \varphi(a_{k+2}) + 2$. As before, this implies that $\chi(Y) \leq -1-2\rho-2\sigma -2\varphi(a_s) + \varphi(a_{k+1}) + \varphi(a_{k+2})$. On the other side, a direct computation shows that $$\chi(Y) = - 2\rho - 2\sigma - \varphi(a_s) + \sum_{i=1}^{k+2} \varphi(a_i)$$ which implies $\sum_{i=1}^{k} \varphi(a_i) + \varphi(a_s) \leq -1$, a contradiction.

    Now the proof proceeds similarly as in Theorem \ref{main}.
\end{proof}

\begin{ej}
   Let $\mathcal{P}= \langle a, b, c \ | \ c^{-1}b^{-1}c^{-1}abca \ , \ b^{-1}c^{-1}aca^{-1}bab^{-1} \rangle$. Using the homomorphism $\varphi:G(\pe)\to\Z$ defined by  $\varphi(a)=\varphi(b)=1$ and $\varphi(c)=2$, the multiset of minima of the first relator is $\{c,a\}$.  By Theorem \ref{pesos}, $G_{\mathcal{P}}$ is locally indicable.
\end{ej}

\subsection*{A generalization for two-relator presentations}
For presentations $\mathcal{P} = \langle a, b, c \ | \ r, s \rangle$, Theorem \ref{pesos} requires the existence of a surjective homomorphism $\varphi:G\to\Z$ such that $m_\varphi(r)$ (or $m_\varphi(s)$) is concatenable, which essentially means that there exists a generator that attains the minimum of the whole word (and only once). We show now that, by appling some Andrews-Curtis moves (or extended Nielsen transformations) to $\pe$,  this hypothesis can be relaxed: we only require the existence of a generator with a unique relative minimum.

\begin{definition}\label{relativemin}
    Let $\mathcal{P}$ be a presentation and  $\varphi:G(\pe)\to\Z$ a surjective homomorphism. By changing, if necessary, a generator by its inverse, we assume that $\varphi(a)$ is non-negative for every generator $a$ of $\pe$. We say that a relator $r$ attains a \textit{unique relative minimum at} $a$ if it has an initial subword of the form $w=\tilde wa^{-1}$ or $w=\tilde w a$, with $\varphi(w)=m_a$, that satisfies the following condition. If  $w=\tilde wa^{-1}$, then $\varphi(\hat wa^{-1})> m_a$ and  $\varphi(\hat wa)> m_a+\varphi(a)$ for every other initial subwords $\hat wa^{-1}$ and $\hat wa$.  If  $w=\tilde wa$, then $\varphi(\hat wa)> m_a$ and  $\varphi(\hat wa^{-1})> m_a-\varphi(a)$ for every other initial subwords $\hat wa^{-1}$ and $\hat wa$. 
\end{definition}

For example, if $r=a^{-1}bac^{-1}bbc^{-1}b$ and $\varphi(a)=1,\varphi(b)=2,\varphi(c)=4$, the sequence of weights is $-1,1,2,-2,0,2,-2,0$. Note that $m_\varphi(r)=\{c,c,b,b\}$ is not concatenable but there is a unique relative minimum at $a$ (with value $-1$).

\begin{corollary}\label{genho}
    Let $\mathcal{P} = \langle a, b, c \ | \ r, s \rangle$ be a presentation of a group $G$ with $H_1(G)=\Z$ where the relators are cyclically reduced. Let $\varphi : G \to \mathbb{Z}$ be a surjective homomorphism with $\varphi(c) \neq 0$. If $r$ attains a unique relative minimum at $a$, then $G$ is locally indicable.
\end{corollary}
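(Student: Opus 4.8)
The plan is to reduce the statement to Theorem \ref{pesos} by applying a sequence of Tietze (extended Nielsen) transformations to $\pe$ that promote the unique relative minimum at $a$ to a genuine global minimum of a relator, at the cost of enlarging the number of generators and relators while keeping the deficiency equal to $1$. Following Definition \ref{relativemin}, I normalize so that $\varphi$ is non-negative on the generators (so $\varphi(c)\geq 1$), and I assume the relative minimum is realized by an initial subword $w=\tilde w a^{-1}$ of weight $m_a$, the case $w=\tilde w a$ being symmetric. I read $r$ as the attaching path of the $2$-cell $R_0$ in the infinite cyclic cover $\tk$ determined by $\varphi$; the hypothesis says precisely that among all the lifts of $a$ occurring in this path, exactly one is incident to the $0$-cell $m_a$, and no lift of $a$ reaches a $0$-cell below $m_a$. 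The obstruction to applying Theorem \ref{pesos} directly is that the path of $R_0$ may still descend strictly below level $m_a$ along subarcs that do not involve $a$ at their bottom.

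The key step is to excise these deep excursions. Each maximal subarc $u$ of the path that dips below level $m_a$ has both endpoints at $0$-cells of level $\geq m_a$; for each such subarc I introduce, via a Tietze move, a new generator $\delta$ together with the defining relation $\delta=u$ (replacing $\delta$ by $\delta^{-1}$ if needed so that $\varphi(\delta)\geq 0$), and I rewrite $r$ by substituting the single letter $\delta^{\pm 1}$ for $u$. Since each substitution replaces a dipping subarc by a single monotone edge joining its (higher) endpoints, the rewritten relator $r'$ has a weight profile that never goes below $m_a$, and by the uniqueness part of the hypothesis its minimum $m_a$ is attained by the distinguished lift of $a$ and by no other lift of $a$; hence $a$ occurs in $m_\varphi(r')$ with multiplicity $1$. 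Each auxiliary relation $r_\delta$ expresses $\delta$ as a short word in the remaining generators, and here the condition $\varphi(c)\neq 0$ is used: the positive-weight generator $c$, distinct from $a$, serves to cap the descent, and the subarcs can be chosen so that each $m_\varphi(r_\delta)$ contains a generator (of $c$-type) of multiplicity $1$ that is fresh relative to the previously introduced relations. Ordering the auxiliary relators first and $r'$ last then makes the whole list of multisets of minima concatenable in the sense of Definition \ref{defconcatenable}, with $r'$ contributing $a$. The transformation preserves $G$, hence $H_1(G)=\Z$ and the deficiency $1$; cyclic reducedness is restored by free reduction; and no relator is a proper power, exactly as in the proof of Theorem \ref{main} using \cite[Proposition 1]{ad1}. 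Theorem \ref{pesos} then yields local indicability of $G$.

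The hard part will be the bookkeeping in the excision step: one must choose the telescoping subarcs so that, simultaneously, every excursion below $m_a$ is removed, each auxiliary relation is cyclically reduced and carries a multiplicity-one generator in its multiset of minima, and the global ordering of all the multisets is concatenable. Nested or consecutive excursions, and excursions sharing endpoints, require care in how the new generators are defined so as not to reintroduce the value $m_a$ at spurious positions or to create coincidences that destroy concatenability. I also expect the precise role of $\varphi(c)\neq 0$ to surface exactly here: it guarantees the availability of a positive-weight generator other than the one carrying the relative minimum, which is what allows the deep excursions to be capped and the minimum at $m_a$ to be isolated; without it the value $m_a$ could be forced to occur along flat ($\varphi$-trivial) edges, and $a$ would fail to be the unique contributor at the minimum level.
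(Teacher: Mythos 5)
Your reduction to Theorem \ref{pesos} via Tietze moves is the right general idea, but the excision scheme has a fatal gap: the multisets of minima of your auxiliary relators are \emph{forced}, not choosable. For $r_\delta = u\delta^{-1}$, the new letter $\delta$ joins the two endpoints of the excursion, which lie at level $\geq m_a$, so $\delta$ never appears at the bottom of its own $2$-cell; hence $m_\varphi(r_\delta)$ is exactly the multiset of letters of $u$ incident to the lowest level reached by $u$, and nothing in the hypotheses controls these letters (the unique-relative-minimum condition constrains only occurrences of $a^{\pm 1}$; it does keep $a$ out of the excursion bottoms, but says nothing about $b$ and $c$). Concretely, take $\varphi(a)=\varphi(b)=\varphi(c)=1$ and $r = a^{-1}c^{-1}bcb^{-1}b^{-1}cb$, with weight sequence $-1,-2,-1,0,-1,-2,-1,0$: this is cyclically reduced, has a unique relative minimum at $a$ (with $m_a=-1$), and $m_\varphi(r)=\{b,c,b,c\}$ is not concatenable, so it is a genuine instance of Corollary \ref{genho}. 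The two maximal excursions below level $-1$ are $u_1=c^{-1}b$ and $u_2=b^{-1}c$, giving $m_\varphi(r_{\delta_1})=m_\varphi(r_{\delta_2})=\{b,c\}$; these two multisets admit no concatenable ordering, since whichever comes second has all of its elements in the first. Excising both dips with one larger subarc only makes it worse: the single auxiliary relator then has multiset $\{b,c,b,c\}$ with no multiplicity-one element at all. So no choice of subarcs produces a family satisfying Theorem \ref{pesos}. A related warning sign is that $\varphi(c)\neq 0$ does no actual work in your argument: the ``capping'' by $c$ is never instantiated as an operation, whereas the hypothesis must be used somewhere (it is what distinguishes this corollary from statements that are false).

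The paper's proof goes in the opposite direction: instead of flattening $r$ from below, it pushes the distinguished occurrence of $a$ down past everything else. One introduces a single new generator $d$ with relator $ac^{-m}d^{-1}c^{m}$, replaces every occurrence of $a$ in $r$ and $s$ by $c^{-m}dc^{m}$, and deletes $a$ together with the new relator, obtaining an equivalent presentation $\langle b,c,d \ | \ \tilde r, \tilde s\rangle$ with three generators and two relators. Every occurrence of $d^{\pm 1}$ in $\tilde r$ sits $m\varphi(c)$ levels below the occurrence of $a^{\pm 1}$ it replaces --- this is exactly where $\varphi(c)\neq 0$ enters --- while the letters $b^{\pm 1}, c^{\pm 1}$ keep their levels. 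The strict inequalities in Definition \ref{relativemin} then guarantee that for $m$ large enough the minimum of the weight sequence of $\tilde r$ is attained exactly once, at the $d$ replacing the distinguished occurrence of $a$, so $m_\varphi(\tilde r)=\{d\}$ is trivially concatenable and Theorem \ref{pesos} applies. Since no auxiliary relators are ever created, the forced-bottom obstruction that defeats your construction simply never arises.
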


\begin{proof}
    Let $m \in \mathbb{N}$ be arbitrary. Suppose that $r$ attains the relative minimum in an initial subword of the form $\tilde wa^{-1}$. We apply the following Nielsen transformations to $\pe$: we add a new generator $d$ together with a new relator $ac^{-m}d^{-1}c^m$, then we replace all the occurrences of the generator $a$ in $r$ and $s$  by  $c^{-m}dc^m$ (and cyclically reduce, if necessary) and obtain new relators  $\Tilde{r}$ and $\Tilde{s}$. Finally we remove the relator  $ac^{-m}d^{-1}c^m$ together with the generator $a$. We end up with an equivalent presentation $\pe' = \langle b,c,d \ | \ \Tilde{r}, \Tilde{s} \rangle$. Since the presentations are equivalent, we have $H_*(\pe')=H_*(\pe)$ and $G(\pe')=G(\pe)$. Note that $\varphi(d)=\varphi(a)$. We will show  that, for $m\in\N$ big enough, $\pe'$ satisfies the hypotheses of Theorem \ref{pesos}.

   For every subword $wa^{-1}$ with weight $k$ in $r$ there are $2m+1$ subwords 
   $$wc^{-1}, \ldots, wc^{-m}, wc^{-m}d^{-1}, wc^{-m}d^{-1}c, \ldots, wc^{-m}d^{-1}c^m$$
   in $\Tilde{r}$ with weights $k+\varphi(a)-\varphi(c), \ldots, k+\varphi(a)-m\varphi(c), k-m\varphi(c), k-(m-1)\varphi(c), \ldots, k$. Similarly, for every subword $wa$ with weight $k$ in $r$, we have $2m+1$ subwords $$wc^{-1}, \ldots, wc^{-m}, wc^{-m}d, wc^{-m}dc, \ldots, wc^{-m}dc^m$$ with weights $k-\varphi(a)-\varphi(c), \ldots, k-\varphi(a) - m\varphi(c), k-m\varphi(c), k-(m-1)\varphi(c), \ldots, k$. Since there is no initial subword of the form $\Tilde{w}a$ with weight $m_a+\varphi(a)$ or less, we can take $m$ big enough such that the minimum in the sequence of weights of $\Tilde{r}$ is unique and is attained at the $d$ that appears after replacing the rightmost $a^{-1}$ of the unique subword $\Tilde{w}a$ of weight $m_a$. The corresponding subword has weight $m_a - m \varphi(c)$.

If $r$ attains the relative minimum in an initial subword of the form $\tilde wa$, replace $r$ by $r^{-1}$.
\end{proof}

\begin{ej}
    Consider the presentation $$\mathcal{P} = \langle a, b, c \ | \ a^{-1}b^{-1}abaab^{-1}cb^{-1}a^{-1}a^{-1}a^{-1}baac^{-1} , c^{-1}b^{-1}cbccb^{-1}cb^{-1}c^{-1}c^{-1}c^{-1}baca^{-1} \rangle$$
    If we take $\varphi:G(\pe)\to\Z$ defined by $\varphi(a)=\varphi(b)=\varphi(c)=1$, the first relator attains a unique relative minimum at $c$. By Corollary \ref{genho}, $G(\pe)$ is locally indicable. One can check that the previous methods do not work for this presentation.
\end{ej}

\subsection*{Weakly concatenable relators and I-test}
Theorems \ref{main} and \ref{pesos} apply to presentations with the homology of $S^1$. Note that both results impose conditions on all the relators of the presentation but one. This fact will be used in the next section to extend a well-known criterion of Howie on LOT groups (see Corollary \ref{ciclos}). We now investigate local indicability of groups with presentations with the homology of a wedge of circles, i.e. groups $G$ with nontrivial free abelian $H_1(G)$ that admit presentations 
 $\mathcal{P} = \langle a_1, \ldots, a_{n} \ | \ r_1, \ldots, r_k \rangle$  where $n-k= rank(H_1(G))$. The next result imposes weaker conditions to all the relators of the presentation. 
 
 It will be convenient to use the notion of \textit{$I$-values}. This notion is inspired in the $I$-test developed in \cite{bm1}.  We will work with sequences of minima. As before, one can formulate and prove analogous results for maxima. 

\begin{definition}
    Let $\mathcal{P}$ be a presentation of a group $G$ and $\varphi: G \to \mathbb{Z}$ a surjective homomorphism. By changing, if necessary, a generator by its inverse, we can assume that $\varphi(a)$ is non-negative for every generator $a$ of $\pe$. Given a word $r$ on the generators,  the sequence of $I$-values of $r$ with respect to $\varphi$ is obtained by assigning to each initial subword $w$ with the form $w=\Tilde{w}x$ (where $x$ is a generator) the value $\varphi(\Tilde{w})$ and to each initial subword of the form $w=\Tilde{w}x^{-1}$ the value $\varphi(\Tilde{w}x^{-1})$.
\end{definition} 

This sequence is useful when considering the covering of $\kp$ corresponding to $\varphi$. The $I$-values of a relator $r$ are precisely the initial vertices of the $1$-cells traversed by the attaching path of $R_{m}$ (where $m$ is the minimum of its sequence of weights).

\begin{remark}
    Note that the unique relative minimum condition of Definition \ref{relativemin} is equivalent to the relator having an initial subword $wa^{-1}$ or $wa$ with minimum $I$-value $m_a$ over all initial subwords of the form $\Tilde{w}a$ or $\Tilde{w}a^{-1}$.
\end{remark}

\begin{remark}
    Given a relator $r$, the multiset of letters in which the minima of the sequence of $I$-values of $r$ are attained is exactly $m_{\varphi}(r)$.
\end{remark}

\begin{definition} A family of relators $\{r_1,\ldots,r_k\}$ is  \textit{weakly concatenable} (with respect to $\varphi:G\to\Z$), if 
    there is an ordering $m_{\varphi}(r_{i_1}), \ldots, m_{\varphi}(r_{i_k})$ of their multisets of minima such that, for every $1 \leq j \leq k$, there exists an element $x \in m_{\varphi}(r_{i_j})$ such that $x \notin \cup_{s=1}^{j-1} m_{\varphi}(r_{i_s})$ and, if the minimum $I$-value of $r_i$ is $m$,  the number of subwords $wx^{-1}$ of $r_i$ with $I$-value $m$ is different from the number of subwords $wx$ with $I$-value $m$.
\end{definition}

The condition on the number of subwords with rightmost letter $x$ in the previous definition will be used in the next result to prove that certain $2$-cell $R_{i,j}$ corresponding to the relator $r_i$ properly involves a $1$-cell corresponding to the generator $x$.

\begin{thm} \label{weak}
     Let $\mathcal{P} = \langle a_1, \ldots, a_{n} \ | \ r_1, \ldots, r_k \rangle$ be a group presentation with cyclically reduced relators of a group $G$ with $H_1(G)$ nontrivial free abelian of rank $n-k$. Let $\varphi : G\to \mathbb{Z}$ be a surjective homomorphism. If $\{r_1,\ldots,r_k\}$ is weakly concatenable with respect to $\varphi$, then $G$ is locally indicable. 
 \end{thm}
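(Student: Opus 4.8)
The plan is to keep the architecture of the proof of Theorem \ref{main}, but—since now \emph{every} relator, rather than all but one, carries a hypothesis—to dispense with the rewriting of a distinguished relator and instead read the reducibility of the cyclic cover directly off the relator $2$-cells. First I would normalize $\varphi$ so that $\varphi(a_i)\geq 0$ for every generator and pass to the infinite cyclic cover $\tk$ of $\kp$ determined by $\varphi$, so that $\pi_1(\tk)=\ker\varphi$. Since $G/\ker\varphi\cong\Z$ is locally indicable and the class of locally indicable groups is closed under extensions, it suffices to prove that $\pi_1(\tk)$ is locally indicable; by Theorem \ref{red} this follows once I exhibit $\tk$ as a reducible $2$-complex none of whose relators is a proper power. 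I index the $0$-cells by $\Z$, write $a_{i,l}$ for the $1$-cell covering $a_i$ with initial vertex $l$ and $R_{i,l}$ for the lift of $R_i$ with minimum traversed $0$-cell $l$, and I reorder the relators so that $x_1,\dots,x_k$ witness weak concatenability: $x_j\in m_{\varphi}(r_j)$, $x_j\notin\bigcup_{s<j}m_{\varphi}(r_s)$, and the numbers of forward and backward traversals of $x_j$ at the minimum $I$-value of $r_j$ differ.

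I would obtain the no-proper-power condition directly from the homology hypothesis. Writing $[r_i]\in\Z^n$ for the exponent-sum vector of $r_i$, the requirement that $H_1(G)=\Z^n/\langle[r_1],\dots,[r_k]\rangle$ be free abelian of rank exactly $n-k$ forces the $[r_i]$ to be $\Q$-linearly independent and to span a saturated sublattice. If some $r_i$ were a proper power $u^m$ with $m\geq 2$, then $[r_i]=m[u]$; the case $[u]=0$ would drop the rank of the lattice below $k$, while $[u]\neq 0$ would produce either a linear dependence (if $[u]\in\langle[r_s]\rangle$, since $mc_i=1$ is impossible for integers) or $m$-torsion in the quotient (if $[u]\notin\langle[r_s]\rangle$), each contradicting the hypothesis. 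Hence no relator is a proper power.

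The heart of the argument is reducibility. Given a finite subcomplex $X\subset\tk$, I choose among its $2$-cells one, $R_{j_0,l_0}$, whose minimum $0$-cell $l_0$ is least, and among those of minimal level the one with the largest concatenability index $j_0$; I claim the $1$-cell $e^1=a_{x_{j_0},l_0}$, the copy of $x_{j_0}$ at the bottom level of this cell, gives an elementary reduction. Proper involvement is exactly what the count condition buys: every appearance of $x_{j_0}$ at the minimum $I$-value of $r_{j_0}$ traverses this one edge $e^1$, forward traversals coming from subwords $wx_{j_0}$ and backward ones from $wx_{j_0}^{-1}$, so the exponent sum of $e^1$ in $\partial R_{j_0,l_0}$ equals their (nonzero) difference and the attaching map cannot be homotoped off $e^1$. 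Isolation is where the ordering enters: any other $2$-cell $R_{i',l'}\in X$ meeting $e^1$ must traverse $x_{j_0}$ at some relative $I$-value $v'\geq 0$ with $l'+v'=l_0$; minimality of $l_0$ forces $l'=l_0$ and $v'=0$, so $x_{j_0}\in m_{\varphi}(r_{i'})$, whence $i'\geq j_0$ by weak concatenability and $i'\leq j_0$ by maximality, giving $(i',l')=(j_0,l_0)$. Thus $e^1$ lies in no other $2$-cell, $X\setminus\{e^1,R_{j_0,l_0}\}$ is a subcomplex, and $(X,\,X\setminus\{e^1,R_{j_0,l_0}\})$ is an elementary reduction.

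I expect the isolation step to be the main obstacle, and the place where the two clauses of weak concatenability must be played off against each other: the net-count clause replaces the multiplicity-one hypothesis of Theorem \ref{main} (and the role of Lemma \ref{techlemma}) to guarantee proper involvement once repetitions of $x_{j_0}$ are allowed, while the newness clause $x_j\notin\bigcup_{s<j}m_{\varphi}(r_s)$ is precisely what lets the extremal choice (lowest level, then largest index) isolate $e^1$. The subtlety to check carefully is that appearances of $x_{j_0}$ at \emph{non-minimal} $I$-value in lower-indexed relators do not spoil isolation; this is absorbed by the global minimality of $l_0$, since such an occurrence would force the offending cell to lie strictly below level $l_0$. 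With reducibility and the no-proper-power condition in hand, Theorem \ref{red} gives that $\pi_1(\tk)$ is locally indicable, and hence so is $G$. If one prefers to apply Theorem \ref{red} only to finite complexes, the same extremal argument shows that any finite box $L$ between two levels is reducible, and a direct limit over such $L$, exactly as in the proof of Theorem \ref{main}, yields the conclusion.
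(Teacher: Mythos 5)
Your proof is correct and follows essentially the same route as the paper's: pass to the infinite cyclic cover, select the $2$-cell whose minimum $0$-cell is least and, among those, the one of largest concatenability index, then use the count clause of weak concatenability for proper involvement and the newness clause (together with minimality of the level) for isolation, concluding via Theorem \ref{red}. The only deviation is minor: where the paper obtains the no-proper-power condition by citing \cite[Proposition 1]{ad1}, you derive it directly from the freeness of $H_1(G)$, which is an equally valid (and self-contained) justification.
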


 \begin{proof}
     We consider as before the covering $\tk$ of $\k_{\mathcal{P}}$ corresponding to $\varphi$.  Again, the $1$-cells $a_{i,j}$ are oriented from $j$ to $j+\varphi(a_i)$ (recall that we assume that the weights $\varphi(a_i)$ are non-negative) and $R_{i,j}$ are the $2$-cells  covering $R_i$  with minimum $0$-cell $j$. By Theorem \ref{red} it suffices to prove that every finite connected $2$-subcomplex $K\subset \tk$ is reducible. Note again that  the no-proper-power condition follows from \cite[Proposition 1]{ad1} and the fact that $\tk$ is an infinite cyclic cover of a $2$-complex $\kp$ with the homology of a wedge of circles. 

     Given a finite and connected $2$-subcomplex $K$, let $j\in\Z$ be the minimum $0$-cell in $K$ that is incident to some $2$-cell of $K$. Let
     $R_{i_1,j},\ldots,R_{i_s,j}$ with $i_1 < \ldots < i_s$ be the $2$-cells incident to the $0$-cell $j$. By hypothesis, $a_{i_s,j}$ is a face of the $2-$cell $R_{i_s,j}$ and is not face of any other $2$-cell $R_{i,l}$ of $K$. Note that this $1$-cell is properly involved in $R_{i_s,j}$ by definition of weakly concatenable, and therefore it is an elementary reduction. 
    \end{proof}

\begin{ej}
	Consider the presentation $\mathcal{P}=\langle a, b, c \ | \ r_1, r_2\rangle$ with relators
$$r_1 = a^{-1}c^{-1}aaab^{-1}b^{-1}c^{-1}ab, \ \ r_2 = c^{-1}b^{-1}cb^{-1}caba^{-1}ba^{-1}.$$
 Let $\varphi:G\to \Z$ be the homomorphism defined by
$\varphi(a)=\varphi(b)=\varphi(c)=1$. It is easy to verify that the multisets of minima are $m_\varphi(r_1)=\{c,a,c,a\}$ and   $m_\varphi(r_2)=\{b,c,b,c\}$, and that $\{r_1,r_2\}$ is weakly concatenable with the ordering $m_\varphi(r_2), m_\varphi(r_1)$. By Theorem \ref{weak}, $G(\pe)$ is locally indicable.
\end{ej}

\section{Applications to labelled oriented trees}

We now apply the results of the previous section to derive new results on LOT groups. The first one is a straightforward application of Theorem \ref{main} and is one of the main results of this paper. It is an extension of a well known result of Howie \cite[Theorem 10.1]{h2} on the left and right graphs associated to a labelled oriented tree, and is also related to similar results by Gersten \cite[Proposition 4.15]{ge} and Wise  \cite{wi} on Adian presentations.

Recall that a labelled oriented graph (LOG) $\Gamma$ consists of two sets $V(\Gamma)$ and $E(\Gamma)$ of vertices and edges, and three  functions $i, t, \lambda : E \to V$ that map each edge to its initial vertex, terminal vertex and label respectively. It is called a labelled oriented tree (LOT) when the underlying graph is a tree. The standard presentation $P(\Gamma)$ (of a group $G(\Gamma)$) associated to each LOG $\Gamma$ is the following: $$P(\Gamma) = \langle V(\Gamma) \ | \ \{t(e)^{-1}\lambda(e)^{-1}i(e)\lambda(e) : e \in E(\Gamma)\}\rangle$$ 
Note that all relators have total exponent $0$ and, if $\Gamma$ is a LOT, $\pg$ has deficiency $1$ and it is a homology circle. Every LOT can be changed, by a finite sequence of transformations, to a reduced one with the same homotopy type (see \cite[Section 3]{h2}). In particular one can assume that $\lambda(e)\neq i(e)$ and $\lambda(e)\neq t(e)$ for every edge $e$, which implies that all the relators of $\pg$ are cyclically reduced.

In this section we will use the homomorphism $\varphi:G(\Gamma)\to\Z$ that sends every generator of $\pg$ to $1$. The sequence of $I$-values with respect to $\varphi$ will be called $I$-sequence. Note that, given any relator $r$ (corresponding to an edge $e$ of $\Gamma$), its $I$-sequence attains exactly two minima, one at $\lambda(e)$ and the other one at $i(e)$, then $m(r)=\{\lambda(e), i(e)\}$. Analogously, it attains two maxima, one  at $\lambda(e)$ and the other one at $t(e)$.

In \cite{h2} Howie introduced two graphs $I(\Gamma)$ and $T(\Gamma)$ associated to any LOG $\Gamma$. The vertex set of the left graph $T(\Gamma)$ is $V(\Gamma)$ and, for every edge $e$ in $\Gamma$, we put an (unoriented) edge $\{\lambda(e), i(e)\}$ (note that these are precisely the letters in which the minima of the $I$-sequence are attained). Similarly, the right graph $I(\Gamma)$ has an edge  $\{\lambda(e), t(e)\}$ for every edge $e$ in $\Gamma$.  In \cite{ge} Gersten investigated these graphs in the more general context of Adian presentations. An Adian presentation is one of the form $\pe=\langle A \ | \ u_i=v_i \ (i\in J)\rangle$ where $u_i$ and $v_i$ are nontrivial positive words on $A$. In this context, the edges of the graph  $T(\pe)$ (resp. $I(\pe)$) connect the first (resp. last) letter of $u_i$ to that of $v_i$. Howie proved that, if $I(\Gamma)$ or $T(\Gamma)$ is a tree then $G(\Gamma)$ is locally indicable (and, in particular, $\Gamma$ is aspherical) \cite[Theorem 10.1]{h2}. In the case of Adian presentations, Gersten proved that if the presentation is cycle-free (i.e. if both graphs are forests) then $\kp$ is DR (diagrammatically reducible) and, in particular, aspherical \cite[Proposition 4.12]{ge}. Moreover, when $\ell(u_i)=\ell(v_i)$ for every $i$ (as in the case of LOTs), if either  $T(\pe)$ or $I(\pe)$ has no cycles then $\kp$ is DR (and hence aspherical) \cite[Proposition 4.15]{ge}. More recently, Wise showed that the $2$-complexes associated to cycle-free Adian presentations have non-positive sectional curvature, which implies that the corresponding groups are locally indicable \cite[Theorem 11.4]{wi}.

Note that Theorem \ref{main} imposes conditions to all the relators of the presentation but one. If either $I(\Gamma)$ or $T(\Gamma)$ has only one cycle, we can remove an edge of the cycle to get a forest. Then, as an immediate application of Theorem \ref{main} (or the analogous result for maxima), we obtain the following generalization of \cite[Theorem 10.1]{h2}.

\begin{corollary} \label{ciclos}
    Let $\Gamma$ be a LOT. If either $I(\Gamma)$ or $T(\Gamma)$ has at most one cycle, then $G(\Gamma)$ is locally indicable.
\end{corollary}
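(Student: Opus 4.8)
The plan is to reduce the statement entirely to Theorem \ref{main} (and its maxima analogue) by translating the combinatorial condition of concatenability into the graph-theoretic structure of $T(\Gamma)$ and $I(\Gamma)$. Recall that $\pg$ has deficiency $1$, is a homology circle (so $H_1(G(\Gamma))=\Z$), and, since $\Gamma$ may be taken reduced, all its relators $r_e = t(e)^{-1}\lambda(e)^{-1}i(e)\lambda(e)$ are cyclically reduced with total exponent $0$. As noted in the excerpt, the $I$-sequence of $r_e$ attains its two minima at $\lambda(e)$ and $i(e)$, so $m(r_e)=\{i(e),\lambda(e)\}$ is exactly the edge that $e$ contributes to $T(\Gamma)$; dually, the multiset of maxima of $r_e$ is $\{t(e),\lambda(e)\}$, the edge of $I(\Gamma)$. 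Thus deciding whether the multisets of minima are concatenable is precisely a question about the edges of $T(\Gamma)$.

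The key observation I would isolate is that, for families of \emph{two-element} multisets, concatenability (Definition \ref{defconcatenable}) is equivalent to the corresponding edges forming a forest. Since $\Gamma$ is reduced we have $\lambda(e)\neq i(e)$, so each $m(r_e)$ is a genuine two-element set in which both elements occur with multiplicity $1$; hence the concatenability condition says exactly that the edges $\{i(e),\lambda(e)\}$ can be ordered so that each one introduces a vertex not met before. If these edges form a forest, such an ordering exists: grow each tree from a chosen root, adding one edge (and one new vertex) at a time. Conversely, if they contain a cycle, then the last cycle-edge in any ordering has both of its endpoints already supplied by the remaining edges of that cycle, so no admissible ordering can exist. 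This establishes the dictionary ``concatenable $\Leftrightarrow$ forest.''

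With this in hand the corollary assembles quickly. Assume first that $T(\Gamma)$ has at most one cycle; the case of $I(\Gamma)$ is identical, working with maxima and invoking the maxima version of Theorem \ref{main} guaranteed by the Remark following Definition \ref{defmulti}. If $T(\Gamma)$ is a forest, then all the multisets $m(r_e)$ are concatenable, and removing any single multiset keeps the family concatenable (deleting a multiset only shrinks the unions appearing in Definition \ref{defconcatenable}); so I designate one relator as $s$ and the remaining $|V(\Gamma)|-2$ relators $r_1,\dots,r_k$ satisfy the hypotheses of Theorem \ref{main}, which yields local indicability. If instead $T(\Gamma)$ has exactly one cycle, I pick an edge $e_0$ lying on that cycle; then $T(\Gamma)\setminus\{e_0\}$ is a forest, so the family $\{m(r_e): e\neq e_0\}$ is concatenable. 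Setting $s=r_{e_0}$ and letting $r_1,\dots,r_k$ be the remaining relators, Theorem \ref{main} applies and $G(\Gamma)$ is locally indicable.

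The genuine mathematical depth here sits inside Theorem \ref{main}, which I am entitled to assume; the new content is the two-element-multiset dictionary together with the remark that Theorem \ref{main} constrains every relator \emph{but one}, which is exactly what lets a single cycle-edge be absorbed into the free relator $s$. The one bookkeeping point to verify carefully is the count matching the format $\langle a_1,\dots,a_{k+2}\mid r_1,\dots,r_k,s\rangle$: a LOT on $n$ vertices gives $n$ generators and $n-1$ relators, so $k=n-2$, and $T(\Gamma)$ acyclic with $n-1$ edges on $n$ vertices is automatically a spanning tree, consistent with Howie's original hypothesis. I do not expect a serious obstacle beyond stating this equivalence cleanly and confirming that removing one edge from a single cycle indeed destroys all cycles.
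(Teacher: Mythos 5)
Your proof is correct and takes essentially the same route as the paper: identify $m(r_e)$ with the edge $\{i(e),\lambda(e)\}$ of $T(\Gamma)$ (dually, the maxima multiset with the edge of $I(\Gamma)$), remove one edge of the unique cycle and give its relator the role of the unconstrained relator $s$, then apply Theorem \ref{main} (or its maxima analogue) to the remaining relators. The only difference is that you make explicit the ``concatenable $\Leftrightarrow$ forest'' dictionary for two-element multisets, which the paper treats as immediate.
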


\begin{ej} \label{ejemplo}
Consider the following LOT $\Gamma$.

\begin{center}
\begin{tikzpicture}[scale = 0.4pt]
\SetGraphUnit{4}

\SetUpEdge[lw = 1pt]

\begin{scope}[VertexStyle/.style = {draw=none}]
\Vertex{0}
\EA(0){1}
\EA(1){2}
\SO(0){d}
\WE[unit = 2](d){a}
\SO(1){b}
\EA[unit = 2](b){e}
\SO(2){c}
\EA[unit = 2](c){f}

\draw[->, thick] (1)--(0) node[pos=.5, above = 0.3mm]  {a};
\draw[->, thick] (1)--(2) node[pos=.5, above = 0.3mm]  {d};

\draw[->, thick] (0)--(a) node[pos=.45, left=-0.3mm]  {b};
\draw[->, thick] (d)--(0) node[pos=.5, right=-0.7mm]  {e};
\draw[->, thick] (1)--(b) node[pos=.5, left=-0.7mm]  {c};
\draw[->, thick] (1)--(e) node[pos=.45, right=-0.2mm]  {f};
\draw[->, thick] (c)--(2) node[pos=.5, left=-0.7mm]  {a};
\draw[->, thick] (2)--(f) node[pos=.5, right=-0.2mm]  {a};
\end{scope}
\end{tikzpicture}
\end{center}

Neither $I(\Gamma)$ nor $T(\Gamma)$ is a tree, so Howie's (or Gersten's) result do not apply. Note that $T(\Gamma)$ has exactly one cycle and then, by Corollary \ref{ciclos}, the group $G(\Gamma)$ is locally indicable (and $\Gamma$ is aspherical).
\end{ej}

In \cite{h2} Howie studied some subfamilies of LOTs of diameter $4$ with $3$ non-extremal vertices. Using Corollary \ref{ciclos} one can show local indicability for most examples of the remaining subfamilies. Example \ref{ejemplo} belongs to one of them, in which one of the extremal vertices appears three times as a label. 

Corollay \ref{ciclos} can be generalized to a more general class of Adian presentations. Suppose $\pe=\langle a_1,\ldots,a_n\ |\ u_1=v_1,\ldots,u_{n-1}=v_{n-1}\rangle$ is an Adian presentation of deficiency $1$ with $H_1(G(\pe))=\Z$ (where the relators are cyclically reduced). Suppose further that there exists a surjective homomorphism $\varphi:G(\pe)\to\Z$ with $\varphi(a_i)>0$ for every $a_i$. Note that for any such homomorphism $\varphi$, the multiset of minima $m_\varphi(r_i)$ of each relator $r_i$ is the set consisting of the first letter of $u_i$ and the first letter of $v_i$. Then, if either $I(\pe)$ or $T(\pe)$ has at most one cycle, as an immediate application of Theorem \ref{pesos}, we obtain the following result, which extends, for a subfamily of Adian presentations, Wise's result \cite[Theorem 11.4]{wi}.

\begin{corollary}\label{adian}
Let $\pe=\langle a_1,\ldots,a_n\ |\ u_1=v_1,\ldots,u_{n-1}=v_{n-1}\rangle$ be an Adian presentation of deficiency $1$ with $H_1(G(\pe))=\Z$. Suppose further that there exists a surjective homomorphism $\varphi:G(\pe)\to\Z$ with $\varphi(a_i)>0$ for every $a_i$. If either $I(\pe)$ or $T(\pe)$ has at most one cycle, then $G(\pe)$ is locally indicable.
\end{corollary}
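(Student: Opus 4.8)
The plan is to deduce the statement as a direct application of Theorem \ref{pesos} when $T(\pe)$ has at most one cycle, and of its maxima analogue when $I(\pe)$ does. The only point that needs care is translating the graph-theoretic hypothesis into the concatenability hypothesis of Theorem \ref{pesos}. As recorded in the paragraph preceding the statement, for every admissible $\varphi$ and every relator $r_i$ (coming from $u_i=v_i$) one has $m_\varphi(r_i)=\{\,\text{first letter of }u_i,\ \text{first letter of }v_i\,\}$, which is exactly the edge of $T(\pe)$ attached to $r_i$; dually, the multiset of maxima of $r_i$ (the maxima analogue of $m_\varphi(r_i)$) is $\{\,\text{last letter of }u_i,\ \text{last letter of }v_i\,\}$, the edge of $I(\pe)$. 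Since the relators are cyclically reduced, the two letters in each multiset are distinct (otherwise $r_i=u_iv_i^{-1}$ would fail to be cyclically reduced), so each multiset is a genuine two-element set of multiplicity-one elements and neither graph has a loop.

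The combinatorial fact I would isolate is that a family of two-element sets, regarded as the edge set of a graph on the generators, is concatenable in the sense of Definition \ref{defconcatenable} if and only if the graph it spans is a forest. Indeed, an ordering witnessing concatenability is precisely an ordering in which each successive edge contributes a vertex not seen before; adding an edge with a brand-new endpoint can never complete a cycle, so such an ordering exists exactly when no cycle is ever formed, i.e.\ when the graph is a forest (the converse is obtained by reversing an iterated leaf-removal on the forest). I would also record the elementary monotonicity that discarding one set from a concatenable family leaves it concatenable: the same ordering works, and shrinking the union of the earlier sets only makes the required ``new element'' easier to find.

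With these observations the argument is short. Assume $T(\pe)$ has at most one cycle. If it is a forest, the whole family $m_\varphi(r_1),\ldots,m_\varphi(r_{n-1})$ is concatenable; if it has exactly one cycle, deleting a single edge $e$ on that cycle produces a forest on the remaining $n-2$ edges, so those $n-2$ multisets are concatenable. In either case I designate the relator of the deleted edge (any relator, in the forest case) as the distinguished relator $s$ of Theorem \ref{pesos}; the remaining $n-2=k$ multisets of minima are then concatenable. Since $\pe$ has deficiency $1$, $H_1(G(\pe))=\Z$, cyclically reduced relators, and the surjection $\varphi$ with $\varphi(a_i)>0$ for all $i$, Theorem \ref{pesos} applies and yields that $G(\pe)$ is locally indicable. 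If instead $I(\pe)$ has at most one cycle, the argument is identical but carried out with the multisets of maxima (the edges of $I(\pe)$) and the maxima version of Theorem \ref{pesos}, which is available by the remark that every result proved for minima has an analogue for maxima.

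The only genuine content, and the step I would be most careful about, is the equivalence between concatenability of a family of edges and its spanning graph being a forest, together with the bookkeeping that ``at most one cycle'' is exactly what is needed to restore concatenability after setting aside the single exempt relator $s$. Once these are in place the result is, as claimed, an immediate consequence of Theorem \ref{pesos} and its maxima counterpart.
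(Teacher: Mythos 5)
Your proposal is correct and takes essentially the same approach as the paper: the paper likewise treats the corollary as an immediate application of Theorem \ref{pesos} (and its maxima analogue), using the observation that $m_\varphi(r_i)$ is precisely the edge of $T(\pe)$ (dually, the multiset of maxima is the edge of $I(\pe)$) and removing one edge of the unique cycle so that its relator plays the role of the exempt relator $s$, exactly as in Corollary \ref{ciclos}. The details you make explicit --- that cyclic reducedness forces the two letters of each multiset to be distinct, and that concatenability of a family of two-element sets is equivalent to the spanned graph being a forest --- are the implicit combinatorial content behind the paper's ``immediate application.''
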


Note that the condition on the existence of the homomorphism $\varphi$ in the previous corollary is automatically satisfied when $\ell(u_i)=\ell(v_i)$ for every $i$.

We finish the paper by analyzing a strategy that can be useful to study local indicability of LOT groups for which $I(\Gamma)$ and $T(\Gamma)$ have more than one cycle. The main idea is to apply convenient extended Nielsen transformations to the LOT presentation $\pg$ to obtain an equivalent presentation $\pe$ which satisfies the hypotheses of Theorem \ref{weak}.

For now on we consider the labelled oriented (original) versions of $I(\Gamma)$ and $T(\Gamma)$: for every edge $a \xrightarrow{c}b$ of a LOT $\Gamma$, we put an edge $a \xrightarrow{b} c$ in $T(\Gamma)$ and an edge $c \xrightarrow{a} b$ in $I(\Gamma)$. We state the result for $T(\Gamma)$, the corresponding result for $I(\Gamma)$ is analogous.

\begin{definition}
    Let $\Gamma$ be a LOT. Let $C$ be a cycle in $T(\Gamma)$ and $e$ an edge in $C$. We say that $e$  is \textit{properly labelled} in $C$ if the number of edges in $C$ with the same label as $e$ oriented clockwise is different from the number of edges with the same label as $e$ oriented counterclockwise.
  \end{definition}

\begin{lemma} \label{ciclo}
    Let $\Gamma$ be a LOT. Let $e$ be an edge in a simple cycle $C$ of $T(\Gamma)$ with label $x$. Suppose further that $x$ does not appear as a vertex in $C$. Then, by applying extended Nielsen transformations, one can replace the relator $r$ corresponding to $e$ in $P(\Gamma)$  by another  relator $\Tilde{r}$ which contains the generator $x$ and other generators that appear as labels or vertices in $C$, the new relator $\Tilde{r}$ has a constant $I$-sequence (it attains a minimum at every letter), and in particular it attains a minimum at $x$. Moreover, if $e$ is properly labelled in $C$, the number of initial subwords of $\Tilde{r}$ of the form $wx$ and the number of initial subwords  of the form $wx^{-1}$  are different.
\end{lemma}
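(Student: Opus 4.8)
The plan is to realize $\tilde r$ by transporting the relator of $e$ around the cycle $C$, using one further cycle‑relator per edge, and to arrange the rewriting so that the resulting word never leaves the bottom two levels of the infinite cyclic cover determined by the homomorphism $\varphi$ sending every generator to $1$ (so that its $I$-sequence is constant).

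First I would fix notation: write $e=e_0$ with $i(e_0)=a$, $\lambda(e_0)=c$ and $t(e_0)=x$, so that $r=x^{-1}c^{-1}ac$ and the $T(\Gamma)$-edge of $e_0$ joins its two endpoints $a$ and $c$. Deleting $e_0$ from $C$ leaves a simple path $\pi$ with vertices $c=u_0,u_1,\dots,u_m=a$ (where $m+1$ is the number of vertices of $C$) and edges $f_1,\dots,f_m$, with $f_k$ joining $u_{k-1}$ and $u_k$ and carrying a label $y_k$. The key algebraic move is to insert trivial words $u_ku_k^{-1}$ to factor the bottom syllable of $r$,
\[
c^{-1}a=u_0^{-1}u_m=(u_0^{-1}u_1)(u_1^{-1}u_2)\cdots(u_{m-1}^{-1}u_m),
\]
and then to rewrite each factor $u_{k-1}^{-1}u_k$ using the relator of $f_k$. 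According to the orientation of $f_k$ this relator is $y_k^{-1}u_{k-1}^{-1}u_ku_{k-1}$ or $y_k^{-1}u_k^{-1}u_{k-1}u_k$, yielding respectively $u_{k-1}^{-1}u_k=y_ku_{k-1}^{-1}$ or $u_{k-1}^{-1}u_k=u_ky_k^{-1}$. Each such replacement is multiplication of $r$ by a conjugate of the corresponding relator, hence an extended Nielsen transformation that does not change the group; performing all of them gives
\[
\tilde r=x^{-1}\Big(\textstyle\prod_{k=1}^{m}B_k\Big)u_0,\qquad B_k\in\{\,y_ku_{k-1}^{-1},\ u_ky_k^{-1}\,\}.
\]

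The crucial observation, which I would verify next, is that every block $B_k$ has sign pattern $(+,-)$ regardless of the orientation of $f_k$. Thus $\tilde r$ has global sign pattern $-,(+,-),\dots,(+,-),+$, i.e.\ its letters strictly alternate in sign, so the partial total exponents oscillate between $-1$ and $0$. Hence the $I$-sequence of $\tilde r$ is constant and every letter attains its minimum $I$-value; in particular the leading letter $x^{-1}$ does, so $\tilde r$ attains a minimum at $x$. Since free reduction of an alternating word removes cancelling pairs while preserving the alternation, the reduced word is still constant on its $I$-sequence, and I would record that it is cyclically reduced using $\lambda(e_0)\neq t(e_0)$ together with the hypothesis that $x$ is not a vertex of $C$.

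Finally, for the ``moreover'' statement I would track the occurrences of $x$. Because $x$ is not a vertex of $C$, the generator $x$ can enter $\tilde r$ only through labels: the leading $x^{-1}$ coming from $e_0$, and one letter $y_k^{\pm1}$ for each edge $f_k$ whose label is $x$. Comparing the two orientation cases shows that an $x$-labelled edge traversed in the direction of a fixed cyclic orientation of $C$ contributes $x^{-1}$, while one traversed against it contributes $x$, and the closing edge $e_0$ obeys the same rule. Consequently the number of subwords $wx$ minus the number of subwords $wx^{-1}$ equals the total exponent of $x$ in $\tilde r$, which is $(\#\text{counterclockwise})-(\#\text{clockwise})$ among the $x$-labelled edges of $C$; this is nonzero exactly when $e$ is properly labelled, giving the desired inequality of counts. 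The main obstacle is precisely this bookkeeping: one must check that the sign of each $x$-occurrence is governed uniformly by the orientation relative to the chosen cyclic direction, and that the cancellations occurring during reduction spoil neither the constancy of the $I$-sequence nor the $x$-count. Here the simplicity of $C$ and the hypothesis that $x$ is not a vertex are exactly what keep the accounting clean (they guarantee the $u_k$ are genuine vertices distinct from $x$, so that $x$'s occurrences are isolated by vertex letters).
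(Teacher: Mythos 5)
Your proposal is correct and follows essentially the same route as the paper: you rewrite $r$ by multiplying in (conjugates of) the relators of the other edges of the cycle, observe that each edge contributes a two-letter block of sign pattern $(+,-)$ so the result is an alternating word (what the paper calls a zig-zag) with constant $I$-sequence, and deduce the count statement for $x^{\pm1}$ from the orientation bookkeeping that defines proper labelling. The only cosmetic difference is that the paper first records a two-relator composition step ($r_1=bw_1a^{-1}$, $r_2=aw_2c^{-1}$ giving $r_1r_2=bw_1w_2c^{-1}$) and then iterates it around the cycle, whereas you do the whole cycle at once via the telescoping factorization $u_0^{-1}u_m=\prod_k(u_{k-1}^{-1}u_k)$; your explicit treatment of free reduction (alternation and the $x$-count difference are preserved) is in fact slightly more careful than the paper's.
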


\begin{proof}
    We say that a word is a \textit{zig-zag} if its length is even and the exponent of the $i$-th letter is $(-1)^{i+1}$ (i.e. it has the form $x_1x_2^{-1}\ldots x_{2s}^{-1}$). Suppose $r_1$ and $r_2$ are relators and $a$ is a letter in which both relators attain a minimum in the $I$-sequence. Let $b$ and $c$ be the other letters in which $r_1$ and $r_2$ attain the other minimum. Up to cyclic permutation and inversions, we can assume $r_1 = bw_1a^{-1}$ and $r_2 = aw_2c^{-1}$ with $w_1,w_2$ zig-zags. Now, we can change $r_1$ by $r_1r_2 = bw_1w_2c^{-1}$ which attains exactly two minima in its $I$-sequence, one at $b$ and the other one at $c$. 

    Let $C = a_1,a_2,a_3,\ldots,a_l,a_1$ be the cycle of $T(\Gamma)$ and $e = (a_1,a_2)$. Let $r_i$ be the relator corresponding to $(a_i,a_{i+1})$ for every $1 \leq i \leq l$ (where the indices are taken modulo $l$). Up to cyclic permutations and inversions, we can assume that $r_i = a_iw_ia_{i+1}^{-1}$ where $w_i$ is a zig-zag word. Now we can replace $r_1$ by $r_1r_2\cdots r_l = a_1w_1w_2\cdots w_la_1^{-1}$, which, after a cyclic permutation, is $w_1w_2\cdots w_l$. Note that $r=w_1w_2\cdots w_l$ is a zig-zag. Then every letter attains the same $I$-value (the minimum). Finally, note that since $(a_1,a_2)$ is properly labelled, the number of occurrences of $x$ in this new relator is different from the number of occurrences of $x^{-1}$.    
\end{proof}

\begin{thm} \label{ciclosetiq}
    Let $\Gamma$ be a LOT. Denote by $X_0, X_1, \ldots, X_k$ the connected components of $T(\Gamma)$. Suppose that $C_1, \ldots, C_{k}$ are generating simple cycles in $X_0$ and the other connected components $X_1,\ldots, X_k$ are trees. If for every $1 \leq i \leq k$ there is at least one properly labelled edge of $C_i$ that has a vertex of $X_i$ as a label and,  for $j \geq i$  there is no other vertex of $X_j$  that appears as a label in the edges of $C_i$, then $G_{\Gamma}$ is locally indicable.   
\end{thm}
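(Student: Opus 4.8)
The plan is to reduce Theorem \ref{ciclosetiq} to an application of Theorem \ref{weak} by transforming the LOT presentation $\pg$, via extended Nielsen transformations, into an equivalent presentation whose relators form a weakly concatenable family with respect to the homomorphism $\varphi:G(\Gamma)\to\Z$ sending every generator to $1$. The hypotheses are engineered so that Lemma \ref{ciclo} can be applied once per generating cycle $C_i$: for each $1\leq i\leq k$ we select a properly labelled edge $e_i$ of $C_i$ whose label $x_i$ is a vertex of the tree component $X_i$, and we replace the relator corresponding to $e_i$ by the product relator $\tilde r_i$ produced by Lemma \ref{ciclo}. By the lemma, each $\tilde r_i$ is a zig-zag, so its $I$-sequence is constant (every letter attains the minimum $I$-value), and because $e_i$ is properly labelled the number of initial subwords of $\tilde r_i$ of the form $wx_i$ differs from the number of the form $wx_i^{-1}$.

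First I would set up the new presentation. After performing these $k$ replacements we obtain an equivalent presentation $\pe=\langle V(\Gamma)\mid \tilde r_1,\ldots,\tilde r_k, \text{(remaining relators)}\rangle$ with $H_*(\pe)=H_*(\pg)$ and $G(\pe)=G(\Gamma)$; the remaining relators are those corresponding to edges of $T(\Gamma)$ not used up in forming the cycle-products, and since deleting one edge from each of the $k$ independent cycles $C_1,\ldots,C_k$ of $X_0$ turns $X_0$ into a tree, the combined underlying graph structure is a forest, so $H_1(G(\pe))=\Z^{?}$ is free abelian of the correct rank and Theorem \ref{weak} is applicable in principle. The next and central step is to verify the weak concatenability ordering: I would order the multisets of minima by placing $m_\varphi(\tilde r_1),\ldots,m_\varphi(\tilde r_k)$ first and then the minima of the tree-relators. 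For the witness element at stage $i$ I would use $x_i$, the label of $e_i$: the condition that $x_i\in X_i$ and that for $j\geq i$ no other vertex of $X_j$ appears as a label in the edges of $C_i$ guarantees that $x_i$ has not yet appeared in any $m_\varphi(\tilde r_s)$ for $s<i$, so $x_i\notin\bigcup_{s<i}m_\varphi(\tilde r_s)$. The unequal-subword-count condition of weak concatenability for $x_i$ is exactly what the "properly labelled" conclusion of Lemma \ref{ciclo} supplies.

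The main obstacle I anticipate is the bookkeeping of which generators appear in which $m_\varphi(\tilde r_i)$, i.e.\ checking that the novelty condition $x_i\notin\bigcup_{s<i}m_\varphi(\tilde r_s)$ genuinely holds under the stated hypothesis. The subtlety is that each $\tilde r_i$ is a product of all relators around the cycle $C_i$, so $m_\varphi(\tilde r_i)$ (being a zig-zag, this is \emph{all} of its letters) potentially contains many labels and vertices coming from $C_i$; I must argue that none of the earlier witnesses $x_1,\ldots,x_{i-1}$ reappears. Here the hypothesis must be read carefully: the condition "for $j\geq i$ there is no other vertex of $X_j$ appearing as a label in the edges of $C_i$" controls precisely the labels occurring in $C_i$, and since each $\tilde r_i$ is built from the labels and vertices of $C_i$ (Lemma \ref{ciclo}), this prevents the witness $x_j$ (a vertex of $X_j$) from being dragged into $m_\varphi(\tilde r_i)$ for $j\geq i$. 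I would make this precise by induction on $i$, tracking the generators appearing in each processed relator.

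Once weak concatenability is established with this ordering and these witnesses, Theorem \ref{weak} applies directly and yields that $G(\Gamma)=G(\pe)$ is locally indicable, completing the proof. The remaining tree-relators cause no difficulty: since $X_1,\ldots,X_k$ are trees, their edges can be ordered so that each new relator introduces a fresh vertex as a witness in the standard way (exactly as in the proof of Corollary \ref{ciclos}), and for an unaltered LOT relator $r$ the $I$-sequence attains its two minima at distinct letters $\lambda(e)$ and $i(e)$ each with odd (in fact single) occurrence count, so the unequal-count condition is automatic.
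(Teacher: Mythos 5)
Your overall strategy (apply Lemma \ref{ciclo} once per cycle, then invoke Theorem \ref{weak}) is the same as the paper's, and your verification that $x_i\notin m_\varphi(\tilde r_s)$ for $s<i$ is correct. But your proposed weak-concatenation ordering --- all of $m_\varphi(\tilde r_1),\ldots,m_\varphi(\tilde r_k)$ first, then the tree-relators --- genuinely fails, and at exactly the step you flagged as central. The point you miss is that Lemma \ref{ciclo} replaces only \emph{one} relator per cycle: the relators corresponding to the other edges of $C_i$ remain untouched in the presentation. For such an edge $(a_j,a_{j+1})$ of $C_i\setminus\{e_i\}$ the multiset of minima is $\{a_j,a_{j+1}\}$, both vertices of $C_i$; on the other hand $\tilde r_i$ is a zig-zag $w_1w_2\cdots w_l$ whose letters include \emph{every} vertex $a_1,\ldots,a_l$ of $C_i$ (and every label), so $m_\varphi(\tilde r_i)$ contains both $a_j$ and $a_{j+1}$. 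Hence any ordering in which $\tilde r_i$ precedes these relators leaves them with no admissible witness; they are forced to come \emph{before} $\tilde r_i$. A second, independent failure: the hypothesis only excludes vertices of $X_j$ with $j\geq i$ from the labels of $C_i$, so $m_\varphi(\tilde r_i)$ may contain vertices of $X_j$ for $j<i$ other than $x_j$; placing all the $\tilde r$'s up front can therefore also poison the witnesses needed to collapse the earlier trees $X_j$ (e.g.\ if $X_j$ is a single edge $\{x_j,v\}$ and $v$ labels an edge of some $C_i$ with $i>j$, both endpoints are already used).

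The correct ordering, which is the one the paper uses, is interleaved: first the relators of the tree $X_0\setminus\{e_1,\ldots,e_k\}$ (in reverse collapse order), then $\tilde r_1$ with witness $x_1$, then the relators of $X_1$ in reverse order of a collapse of $X_1$ \emph{towards} $x_1$, then $\tilde r_2$, then $X_2$ collapsed towards $x_2$, and so on, ending with $\tilde r_k$ and $X_k$. With this ordering the cycle-edge relators and the $X_0$-relators precede every $\tilde r_i$ (fixing the first problem), and the relators of $X_j$ precede every $\tilde r_i$ with $i>j$ (fixing the second); the asymmetric hypothesis ``for $j\geq i$ no other vertex of $X_j$ labels an edge of $C_i$'' is exactly what makes this interleaving, and the induction on $i$, go through. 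So the gap in your proposal is not mere bookkeeping: the ordering itself must be changed.
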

	
\begin{proof}
 To illustrate the idea of the proof, we show the case $k=1$.
    Let $e_1$ be a properly labelled edge in the cycle $C_1$, which is labelled with a vertex $x_1$ of $X_1$. By 
    Lemma \ref{ciclo} we can replace the corresponding relator $r_1$ by a relator $\Tilde{r_1}$, which attains a minimum at the vertex $x_1$ and in which the number of initial subwords $wx_1$ with minimum $I$-value and the number of initial subwords $wx_{1}^{-1}$ with minimum $I$-value are different. Note that the new presentation $\pe$ is Andrews-Curtis equivalent to $P(\Gamma)$.

    Now, the relators of the new presentation $\pe$ are weakly concatenable and the result follows from Theorem \ref{weak}. The ordering  for a weak concatenation from back to front is the following: first follow the order of collapses of all the edges of the tree $X_1$ to the vertex $x_1$, then continue with the new relator $\Tilde{r_1}$, and finally use any ordering of weak concatenation for the remaining relators of $X_0$ (note that $X_0-e_1$ is a tree).
    
     The general case follows by induction in a similar way. Note that we require that for $j \geq i$  there is no other vertex of $X_j$  that appears as a label in the edges of $C_i$. This condition is necessary to guarantee weak concatenability, since, by Lemma \ref{ciclo} the new relators  $\Tilde{r_i}$ attain a minimum at every letter that appears in the cycle $C_i$.
\end{proof}

\begin{ej}
    Consider the following LOT $\Gamma$

   \begin{center}
   	\begin{tikzpicture}[scale = 0.4pt]
   		\SetGraphUnit{4}
   		
   		\SetUpEdge[lw = 1pt]
   		
   		\begin{scope}[VertexStyle/.style = {draw=none}]
   			\Vertex{0}
   			\EA(0){1}
   			\EA(1){2}
   			\SO(0){d}
   			\WE[unit = 2](d){e}
   			\SO(1){c}
   			\WE[unit = 2](c){f}
   			\EA[unit = 2](c){b}
   			\SO(2){a}
   			
   			\draw[->, thick] (0)--(1) node[pos=.5, above = 0.3mm]  {d};
   			\draw[->, thick] (2)--(1) node[pos=.5, above = 0.3mm]  {e};
   			
   			\draw[->, thick] (0)--(e) node[pos=.45, left=-0.3mm]  {a};
   			\draw[->, thick] (0)--(d) node[pos=.5, right=-0.7mm]  {b};
   			\draw[->, thick] (c)--(1) node[pos=.5, left=-0.2mm]  {f};
   			\draw[->, thick] (f)--(1) node[pos=.5, left=-0.2mm]  {b};
   			\draw[->, thick] (b)--(1) node[pos=.45, right=-0.2mm]  {a};
   			\draw[->, thick] (a)--(2) node[pos=.5, left=-0.7mm]  {c};
   		\end{scope}
   	\end{tikzpicture}
   \end{center}

Both $I(\Gamma)$ and $T(\Gamma)$ have two cycles, so we cannot apply Corollary \ref{ciclos}. The graph $I(\Gamma)$ is displayed in
Figure \ref{igamma}. 

\begin{figure}[h]

\begin{center}
\includegraphics[width=4cm]{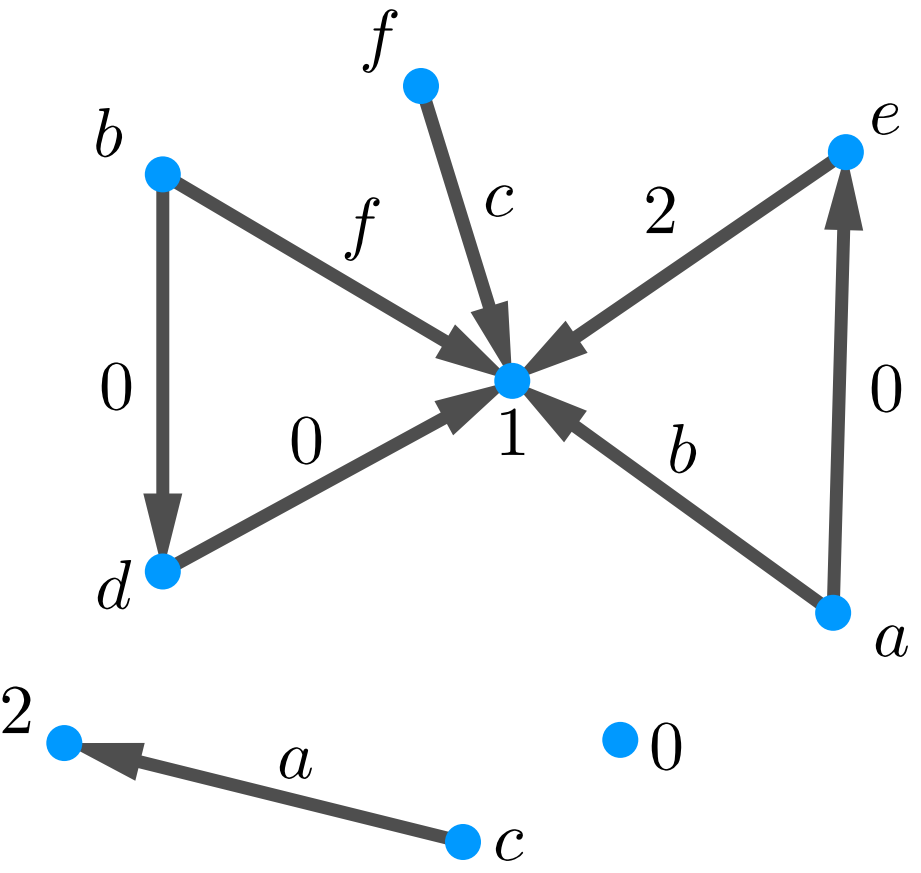}
\end{center}
	\caption{$I(\Gamma)$\label{igamma}}
\end{figure}

Note that $I(\Gamma)$ satisfies the hypotheses of Theorem \ref{ciclosetiq} with $X_0$ the subgraph with vertices  $\{b,d,1,e,a,f\}$, $X_1 = \{0\}$ and $X_2$ the tree with vertices $\{2,c\}$. In the cycle $C_1 = 1,b,d,1$ we consider the edges properly labelled with $x_1=0$ and in  the cycle $C_2 = 1, a, e, 1$ we take the edge properly labelled with $x_2=2$. This shows that $G(\Gamma)$ is locally indicable.
\end{ej}

\end{document}